
\documentclass[a4paper,openbib,12pt]{article}
%%%%%%%%%%%%%%%%%%%%%%%%%%%%%%%%%%%%%%%%%%%%%%%%%%%%%%%%%%%%%%%%%%%%%%%%%%%%%%%%%%%%%%%%%%%%%%%%%%%%%%%%%%%%%%%%%%%%%%%%%%%%%%%%%%%%%%%%%%%%%%%%%%%%%%%%%%%%%%%%%%%%%%%%%%%%%%%%%%%%%%%%%%%%%%%%%%%%%%%%%%%%%%%%%%%%%%%%%%%%%%%%%%%%%%%%%%%%%%%%%%%%%%%%%%%%
\usepackage{geometry}
\usepackage{amsmath}
\usepackage{amssymb}
\usepackage{amsfonts}

\setcounter{MaxMatrixCols}{10}
%TCIDATA{OutputFilter=LATEX.DLL}
%TCIDATA{Version=5.50.0.2960}
%TCIDATA{<META NAME="SaveForMode" CONTENT="1">}
%TCIDATA{BibliographyScheme=Manual}
%TCIDATA{Created=Monday, December 19, 2016 08:56:27}
%TCIDATA{LastRevised=Sunday, April 28, 2019 12:33:21}
%TCIDATA{<META NAME="GraphicsSave" CONTENT="32">}
%TCIDATA{<META NAME="DocumentShell" CONTENT="Standard LaTeX\Standard LaTeX Article">}
%TCIDATA{Language=American English}
%TCIDATA{CSTFile=40 LaTeX article.cst}
%TCIDATA{PageSetup=57,57,57,57,0}
%TCIDATA{Counters=arabic,1}
%TCIDATA{AllPages=
%H=36
%F=36
%}

\geometry{left=2.5cm,right=2.5cm,top=2.5cm,bottom=2.5cm}
\newtheorem{theorem}{Theorem}[section]

\newtheorem{conjecture}[theorem]{Conjecture}
\newtheorem{corollary}[theorem]{Corollary}

\newtheorem{example}[theorem]{Example}

\newtheorem{lemma}[theorem]{Lemma}

\newtheorem{problem}[theorem]{Problem}

\newtheorem{remark}[theorem]{Remark}

\newenvironment{proof}[1][Proof]{\noindent\textbf{#1.} }{\ \rule{0.5em}{0.5em}}
\input{tcilatex}
\begin{document}

\title{Symmetries of flat manifolds, Jordan property and the general Zimmer
program}
\author{Shengkui Ye}
\maketitle

\begin{abstract}
We obtain a sufficient and necessary condition for a finite group to act
effectively on a closed flat manifold. Let \ $G=E_{n}(R)$, $EU_{n}(R,\Lambda
),$ $\mathrm{SAut}(F_{n})$ or $\mathrm{SOut}(F_{n}).$ As applications, we
prove that when $n\geq 3$ every group action of $G$ on a closed flat
manifold $M^{k}$ ($k<n$) by homeomorphisms is trivial. This confirms a
conjecture related to Zimmer's program for flat manifolds. Moreover, it is
also proved that the group of homeomorphisms of closed flat manifolds are
Jordan with Jordan constants depending only on dimensions.
\end{abstract}

\textbf{MSC}: 22F05,20F65

\section{Introduction}

Let $\Gamma $ be a group acting freely, isometrically, properly
discontinuously and cocompactly on the Euclidean space $\mathbb{R}^{n}.$ The
quotient space $M=\mathbb{R}^{n}/\Gamma $ is called a flat manifold. A
classical result of Bieberbach implies that there is a short exact sequence
of groups%
\begin{equation*}
1\rightarrow \mathbb{Z}^{n}\rightarrow \Gamma \rightarrow \Phi \rightarrow 1,
\end{equation*}%
where $\Phi <\mathrm{GL}_{n}(\mathbb{Z})$ is called the holonomy group of $%
M. $ In this article, we are interested in the following question on
symmetries of $M$ :

\begin{problem}
What kind of finite groups $G$ could act effectively on a flat manifold $M$
by homeomorphisms?
\end{problem}

We give a sufficient and necessary condition for a finite group to act
effectively on a flat manifold and thus solve the previous problem
completely.

\begin{theorem}
\label{th3} A finite group $G$ acts effectively on a closed flat manifold $%
M^{n}$ with the fundamental group $\pi $ and the holonomy group $\Phi $ by
homeomorphisms if and only if there is an abelian extension 
\begin{equation*}
1\rightarrow A\rightarrow G\rightarrow Q\rightarrow 1
\end{equation*}%
such that

\begin{enumerate}
\item[(i)] $Q\cong \Phi ^{\ast }/\Phi $ for a finite subgroup $\Phi ^{\ast }<%
\mathrm{GL}_{n}(\mathbb{Z});$

\item[(ii)] there is an $(\Phi ^{\ast },Q)$-equivariant surjection $\ \alpha
:\mathbb{Z}^{n}\twoheadrightarrow A,$ and a commutative diagram%
\begin{equation*}
\begin{array}{ccccccccc}
1 & \rightarrow & \mathbb{Z}^{n} & \rightarrow & G^{\ast } & \rightarrow & 
\Phi ^{\ast } & \rightarrow & 1 \\ 
&  & \alpha \downarrow &  & f\downarrow &  & \downarrow &  &  \\ 
1 & \rightarrow & A & \overset{i}{\rightarrow } & G & \rightarrow & Q & 
\rightarrow & 1%
\end{array}%
\end{equation*}%
with torsion-free kernel $\ker f=\pi .$ Here \ $\alpha (gx)=\bar{g}\alpha
(x) $ for any $x\in \mathbb{Z}^{n},g\in \Phi ^{\ast }$, where $\bar{g}\in Q$
acts on the abelian group $A$ through the exact sequence and $\ker (\Phi
^{\ast }\rightarrow Q)=\Phi $.
\end{enumerate}
\end{theorem}

Denote by $\mathrm{Aff}(M)$ the group of affine equivalences of the closed
flat manifold $M$ and by $\mathrm{Aff}_{0}(M)$ the identity component, which
is a torus of dimension $b_{1}(M)$ (the first Betti number). Charlap and
Vasquez \cite{cv} prove that $\mathrm{Aff}(M)/\mathrm{Aff}_{0}(M)$ is
isomorphic to the outer automorphism group $\mathrm{Out}(\pi _{1}(M)).$ From
this, it is not hard to derive necessary conditions on finite groups acting
on $M.$ For a fixed group homomorphism $\varphi :G\rightarrow \mathrm{Out}%
(\pi _{1}(M)),$ Lee and Raymond \cite{lr} obtain a sufficient and necessary
condition for the finite group $G$ acts effectively on $M$ inducing $\varphi 
$. However, the group $\mathrm{Out}(\pi _{1}(M))$ is generally complicated,
partly because the holonomy group $\Phi $ is subtle. For some open problems
relating $\mathrm{Out}(\pi _{1}(M)),$ see Szczepa\'{n}ski \cite{sz}. Our
characterization does not use $\mathrm{Out}(\pi _{1}(M)).$ The proof of
Theorem \ref{th3} depends on results obtained by Lee and Raymond \cite{lr}.

As an easy application of Theorem \ref{th3} to finite group actions on tori,
we have a simpler characterization as the following. To the best of our
knowledge, this characterization has so far not been stated explicitly in
the literature, except possibly for low-dimensional cases (eg. $n=1,2$).

\begin{theorem}
A finite group $G$ acts effectively on a torus $T^{n}$ if and only if there
is an abelian extension 
\begin{equation*}
1\rightarrow A\rightarrow G\rightarrow Q\rightarrow 1
\end{equation*}%
such that

\begin{enumerate}
\item[(i)] $Q<\mathrm{GL}_{n}(\mathbb{Z});$

\item[(ii)] there is an $Q$-equivariant surjection $\ \alpha :\mathbb{Z}%
^{n}\twoheadrightarrow A$ and the cohomology class representing of the
extension lies in the image $\func{Im}(H^{2}(Q;\mathbb{Z}^{n})\rightarrow
H^{2}(Q;A)).$
\end{enumerate}
\end{theorem}

We give two applications in the following. Let $\mathrm{SL}_{k}(\mathbb{Z})$
be the special linear group over integers. Since $\mathrm{SL}_{k}(\mathbb{Z}%
) $ acts linearly on the Euclidean space $\mathbb{R}^{k}$ fixing the origin,
there is an induced action of $\mathrm{SL}_{k}(\mathbb{Z})$ on the sphere $%
S^{k-1}.$ It is believed that this action is minimal in the following sense.

\begin{conjecture}
\label{conj}Any group action of $\mathrm{SL}_{n}(\mathbb{Z})$ $(n\geq 3)$ on
a compact connected $r$-manifold by homeomorphisms factors through a finite
group action if $r<n-1.$
\end{conjecture}

The smooth version of this conjecture was formulated by Farb and Shalen \cite%
{fs}, that is an analogue of a special case of one of the central
conjectures in the Zimmer program (see \cite{zi3,zi5}) concerning group
actions of lattices in Lie groups on manifolds. For more details and the
status, see the survey articles \cite{fi,zm}.

As an application of Theorem \ref{th1}, we confirm Conjecture \ref{conj} for
closed flat manifolds, as a special case of the following. Recall from
Section \ref{sec} the definitions of elementary linear group $E_{n}(R)$ over
an associative ring $R,$ the elementary unitary group $EU_{n}(R,\Lambda )$
over a form ring $(R,\Lambda ),$ the special automorphism group $\mathrm{SAut%
}(F_{n})$ and the special outer automorphism group $\mathrm{SOut}(F_{n})$ of
a free group $F_{n}.$ Note that when $R=\mathbb{Z},$ we have \ $E_{n}(R)=%
\mathrm{SL}_{n}(\mathbb{Z}).$

\begin{theorem}
\label{th2}Let \ $G=E_{n}(R)$, $EU_{n}(R,\Lambda ),$ $\mathrm{SAut}(F_{n})$
or $\mathrm{SOut}(F_{n}),n\geq 3.$ Suppose that $M^{r}$ is a closed flat
manifold. When $r<n,$ any group action of the group \ $G$ on $M^{r}$ by
homeomorphisms is trivial, i.e. is the identity homeomorphism$.$
\end{theorem}

The ideas used in the proof of Theorem \ref{th2} differ from most other work
on Zimmer's program, where actions preserving volume as well as extra
geometric structures such as a connection or pseudo-Riemannian metric are
studied (see \cite{f1,f2,zi2,zi3,zi4,zi5}). The latter work uses
ergodic-theoretic methods, while our methods are topological. Based on the
knowledge on finite groups acting on flat manifolds, we prove that the
alternating group $A_{n+1}$ $(n\geq 4)$ cannot act effectively on $M^{r}$
when $r<n$ (see Theorem \ref{th1}). The group $G$ is normally generated by $%
A_{n+1}$ and thus acts trivially on $M.$ When $M^{r}=S^{1},$ the circle, the
case of $\mathrm{SL}_{n}(\mathbb{Z})$ in Theorem \ref{th2} (more generally
lattices in semisimple Lie groups) is already known to \cite{wi,bm,gy}.
Weinberger \cite{we} obtains a similar result for the torus $M=T^{r}.$
Bridson and Vogtmann \cite{bv1} prove a similar result for $\mathrm{SAut}%
(F_{n})$ and $\mathrm{SOut}(F_{n})$ actions on spheres. When $r\leq 5,$ the
case of $\mathrm{SL}_{n}(\mathbb{Z})$ in Theorem \ref{th2} is proved by Ye 
\cite{ye}. For orientable manifolds whose Euler characteristics are not
divisible by $6,$ Conjecture \ref{conj} is confirmed by Ye \cite{ye2}. Note
that the Euler characteristic of a flat manifold is always zero and the
result proved in \cite{ye2} cannot apply. For $C^{1+\beta }$-group actions
of finite-index subgroup in $\mathrm{SL}_{n}(\mathbb{Z}),$ one of the
results proved by Brown, Rodriguez-Hertz and Wang \cite{brw} confirms
Conjecture \ref{conj} for surfaces. For $C^{2}$-group actions of cocompact
lattices, Brown-Fisher-Hurtado \cite{bfh} confirms Conjecture \ref{conj}.
Note that the $C^{0}$-actions could be very different from smooth actions.
For some unsmoothable group actions of mapping class groups and $\mathrm{Out}%
(F_{n})$ on one-dimensional manifolds$,$ see Baik-Kim-Koberda \cite{bkt}.

\begin{remark}
The usual Zimmer's program\bigskip\ is stated for any lattice in high-rank
semisimple Lie groups. However, Theorem \ref{th2} do not hold for general
lattices. For example, the congruence subgroup $\Gamma (n,p),$ which is
defined as the kernel of $\mathrm{SL}_{n}(\mathbb{Z})\rightarrow \mathrm{SL}%
_{n}(\mathbb{Z}/p)$ for a prime $p,$ has a nontrivial finite cyclic quotient
group (cf. \cite{ls}, Theorem 1.1). The group $\Gamma (n,p)$ could act on $%
S^{1}$ through the cyclic group by rotations.
\end{remark}

We give another application as the following. Recall that a group $H$ is
called Jordan if there is a constant $c$ depending only on $H$ such that
every finite subgroup $G<H$ contains an abelian subgroup $A<G$ of index $%
|G,A|<c.$ Since the general linear group $\mathrm{GL}_{n}(\mathbb{Z})$
contains only finitely many conjugacy classes of finite groups, we see that $%
\mathrm{GL}_{n}(\mathbb{Z})$ is Jordan. A famous theorem of Jordan \cite{J}
(see also \cite{cr}) is that the general linear group $\mathrm{GL}_{n}(%
\mathbb{C})$ is Jordan. Ghys \cite{fi, gy2} conjectures that the
diffeomorphism group of any smooth compact manifold is Jordan. Zimmermann 
\cite{zim} proves the conjecture for compact 3-manifolds and Mundet i Riera 
\cite{ri1,ri2,ri3} proves the conjecture for tori, acyclic manifolds,
homology spheres, and manifolds with non-zero Euler characteristic. However,
it is shown by Csik\'{o}s, Pyber and Szab\'{o} \cite{cz} that the
diffeomorphism group of $S^{2}\times T^{2}$ is not Jordan. Denote by the
constant $f(n)=\max \{|G|:G<\mathrm{GL}_{n}(\mathbb{Z})$ is finite\}. The
study of $f(n)$ has a long history. Some arguments of Weisfeiler and Feit
would imply that $f(n)\leq 2^{n}(n+1)!$ when $n\geq 11.$ (for more details,
see the survey article \cite{gl}, Section 6.1). As another application of
Theorem \ref{th3}, we prove that homeomorphism groups of flat manifolds are
Jordan, as follows.

\begin{corollary}
\label{th4}Let $M^{n}$ be a closed flat manifold and $G$ be a finite
subgroup of the group of homeomorphisms of $M.$ Then $G$ contains an abelian 
\emph{normal} subgroup of index at most $f(n).$
\end{corollary}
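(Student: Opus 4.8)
The plan is to apply Theorem \ref{th3} directly, since the corollary is essentially an immediate structural consequence of the characterization it provides. First I would observe that because $G$ is a finite subgroup of $\mathrm{Homeo}(M)$, the inclusion $G\hookrightarrow \mathrm{Homeo}(M)$ is an effective action of $G$ on the closed flat manifold $M^{n}$. Hence Theorem \ref{th3} supplies an abelian extension
$$1\rightarrow A\rightarrow G\rightarrow Q\rightarrow 1$$
satisfying the two listed conditions; in particular $A$ is abelian and $Q\cong \Phi ^{\ast }/\Phi $ for some finite subgroup $\Phi ^{\ast }<\mathrm{GL}_{n}(\mathbb{Z})$.

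Next I would identify the required subgroup. The subgroup $A$ is the kernel of the surjection $G\twoheadrightarrow Q$, so it is a \emph{normal} abelian subgroup of $G$, and its index equals the order of the quotient:
$$|G:A|=|Q|=|\Phi ^{\ast }/\Phi |=\frac{|\Phi ^{\ast }|}{|\Phi |}\leq |\Phi ^{\ast }|.$$
Since $\Phi ^{\ast }$ is a finite subgroup of $\mathrm{GL}_{n}(\mathbb{Z})$, the definition of $f(n)$ gives $|\Phi ^{\ast }|\leq f(n)$, and therefore $|G:A|\leq f(n)$. This is exactly the asserted bound, and it produces the abelian normal subgroup of index at most $f(n)$ required for Jordan-ness, with Jordan constant $f(n)$ depending only on the dimension $n$.

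I do not expect any genuine obstacle once Theorem \ref{th3} is in hand: the entire content is the structural decomposition it furnishes, and the index bound is simply read off from the fact that the quotient $Q$ is a quotient of a finite subgroup of $\mathrm{GL}_{n}(\mathbb{Z})$ and hence has order at most $f(n)$. The only points I would take care to verify are that the extension supplied by Theorem \ref{th3} has abelian kernel (so that $A$ is genuinely abelian, not merely normal) and that $f(n)$ is finite; the latter follows from the remark in the introduction that $\mathrm{GL}_{n}(\mathbb{Z})$ contains only finitely many conjugacy classes of finite subgroups. To conclude that $\mathrm{Homeo}(M)$ is itself Jordan, I would finally note that every finite $G<\mathrm{Homeo}(M)$ contains such an abelian subgroup of index bounded by the single constant $f(n)$, independent of the particular $G$.
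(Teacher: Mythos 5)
Your proposal is correct and follows exactly the paper's own argument: apply Theorem \ref{th3} to the effective action given by the inclusion $G\hookrightarrow \mathrm{Homeo}(M)$, take $A$ to be the abelian kernel of the extension, and bound $|G:A|=|Q|=|\Phi^{\ast}|/|\Phi|\leq |\Phi^{\ast}|\leq f(n)$. The only difference is that you spell out the index computation that the paper compresses into one sentence, which is a harmless (indeed helpful) elaboration.
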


The smooth version of Corollary \ref{th4} is already known to Mundet i Riera 
\cite{ri1}. Compared with his result, Corollary \ref{th4} holds true for
homeomorphisms and contains an explicit bound of subgroup index. Such a
bound would be crucial in the proof of Theorem \ref{th1}.

The paper is organized as follows. In Section 2, we give several basic facts
on minimal dimensions. In Section 3, the symmetries of flat manifolds are
studied and Theorem \ref{th3} is proved. Theorem \ref{th1} is proved in
Section 4, with the help of computer programs. In the last two sections, we
discuss the (infinite) groups normally generated by alternating groups and
prove Theorem \ref{th2}.

\section{Minimal dimensions}

Let $R=\mathbb{Z},\mathbb{Q}$ or $\mathbb{R}$ be the ring of integers,
rational numbers or real numbers, and $\mathrm{GL}_{n}(R)$ the general
linear group over $R$. For a group $G,$ define the minimal faithful
representation dimension 
\begin{equation*}
d_{R}(G)=\min \{n\mid G\hookrightarrow \mathrm{GL}_{n}(R)\}.
\end{equation*}%
Let $\mathcal{M}=\cup _{i=1}^{+\infty }\mathcal{M}^{i}$ denote a collection
of manifolds, and $\mathcal{M}^{i}$ denote the subset of $\mathcal{M}$
consisting of those manifolds of dimension $i$. For example, $\mathcal{R}%
=\cup _{i=1}^{+\infty }\mathbb{R}^{i}$ the set of Euclidean spaces, $%
\mathcal{T}=\cup _{i=1}^{+\infty }T^{i}$ the set of tori and $\mathcal{FM=}%
\cup _{i=1}^{+\infty }\mathcal{FM}^{i}$ the set of closed flat manifolds
(i.e. manifolds finitely covered by tori). For a group $G,$ define the
minimal acting dimensions%
\begin{equation*}
d_{h}(G,\mathcal{M})=\min \{n\mid G\hookrightarrow \mathrm{Homeo}(N)\text{
for some }N\in \mathcal{M}^{n}\}
\end{equation*}%
and%
\begin{equation*}
d_{s}(G,\mathcal{M})=\min \{n\mid G\hookrightarrow \mathrm{Diff}(N)\text{
for some }N\in \mathcal{M}^{n}\},
\end{equation*}%
where $\mathrm{Homeo}(N)$ (resp. $\mathrm{Diff}(N)$) is the group of
homeomorphisms (resp. diffeomorphisms) of $N.$ If there are no such minimal $%
n$, we define the dimensions as $\infty .$ The study of the representation
dimensions has a long history (eg. \cite{cr}). It is obvious that 
\begin{equation*}
d_{\mathbb{R}}(G)\leq d_{\mathbb{Q}}(G)\leq d_{\mathbb{Z}}(G)
\end{equation*}
and $d_{h}\leq d_{s}$ for any group $G.$ It is interesting to compare the
representation dimensions with the acting dimensions. In this section, we
prove several basic facts on these dimensions.

\begin{lemma}
Let $G$ be a group with a subgroup $H<G.$ Then $d(H)\leq d(G)$ for $d\in
\{d_{\mathbb{R}},d_{\mathbb{Q}},d_{\mathbb{Z}},d_{s},d_{h}\}.$
\end{lemma}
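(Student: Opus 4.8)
The plan is to prove this monotonicity lemma for each of the five notions of dimension by constructing, from whatever embedding realizes $d(G)$, a corresponding embedding that witnesses $d(H) \leq d(G)$ via restriction to the subgroup $H$. The key observation is that for every one of these dimensions, the defining property is the existence of an injective homomorphism from the group into some target (a linear group $\mathrm{GL}_n(R)$, or a homeomorphism/diffeomorphism group of a manifold in the relevant family), and injectivity is inherited by restriction to a subgroup.

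For the representation dimensions $d_{\mathbb{R}}, d_{\mathbb{Q}}, d_{\mathbb{Z}}$, I would argue as follows. Let $n = d_R(G)$ where $R \in \{\mathbb{R},\mathbb{Q},\mathbb{Z}\}$, so there is an injection $\rho : G \hookrightarrow \mathrm{GL}_n(R)$ realizing the minimum. Restricting $\rho$ to the subgroup $H < G$ gives a homomorphism $\rho|_H : H \to \mathrm{GL}_n(R)$, which is still injective because the restriction of an injective map to any subset is injective. Hence $H$ embeds into $\mathrm{GL}_n(R)$, so by definition $d_R(H) \leq n = d_R(G)$. The same verbatim argument handles all three rings simultaneously, since the only property used is that a faithful representation of $G$ restricts to a faithful representation of $H$.

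The acting dimensions $d_s$ and $d_h$ are handled identically. Let $n = d_s(G, \mathcal{M})$, so there is an embedding $G \hookrightarrow \mathrm{Homeo}(N)$ for some $N \in \mathcal{M}^n$. The composite $H \hookrightarrow G \hookrightarrow \mathrm{Homeo}(N)$ is again injective, so $H$ embeds into $\mathrm{Homeo}(N)$ for the \emph{same} manifold $N \in \mathcal{M}^n$, giving $d_s(H, \mathcal{M}) \leq n$. The diffeomorphism case $d_h$ is word-for-word the same with $\mathrm{Homeo}(N)$ replaced by $\mathrm{Diff}(N)$. In every case, if $d(G) = \infty$ the inequality is vacuous, so we may assume the minimum is finite and realized by an actual embedding.

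There is no genuine obstacle here: the statement is essentially a formal consequence of the fact that each dimension is defined as the least $n$ admitting a monomorphism into an $n$-dependent target, and monomorphisms restrict to monomorphisms. The only point deserving a moment of care is the uniform bundling of all five cases into one argument, which is legitimate precisely because each definition shares this common shape, so I would phrase the proof once and remark that it applies to each $d$ in the list without modification.
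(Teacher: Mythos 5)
Your proof is correct and matches the paper's argument exactly: the paper disposes of all five cases at once by noting that any injective map of $G$ restricts to an injection on the subgroup $H$, which is precisely your restriction-of-monomorphisms observation (you merely spell it out case by case and add the vacuous remark about $d(G)=\infty$).
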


\begin{proof}
This is obvious, since any injective map of \ $G$ restricts an injection on
the subgroup $H.$
\end{proof}

\begin{lemma}
Let $G$ be a finite group. Then $d_{\mathbb{Q}}(G)=d_{\mathbb{Z}}(G)<+\infty
.$
\end{lemma}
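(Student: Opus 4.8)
The plan is to prove the two-sided equality together with finiteness. The inequality $d_{\mathbb{Q}}(G)\le d_{\mathbb{Z}}(G)$ is already recorded in the Introduction (it follows from $\mathbb{Z}\subset\mathbb{Q}$, since any faithful integral representation is in particular a faithful rational one). Finiteness I would obtain from the regular representation: the left-multiplication action of $G$ on the free $\mathbb{Z}$-module $\mathbb{Z}[G]$ is faithful and has rank $|G|$, so $d_{\mathbb{Z}}(G)\le|G|<+\infty$, and hence both dimensions are finite. Everything therefore reduces to the reverse inequality $d_{\mathbb{Z}}(G)\le d_{\mathbb{Q}}(G)$.

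For that, set $n=d_{\mathbb{Q}}(G)$ and fix a faithful representation $\rho\colon G\hookrightarrow\mathrm{GL}_n(\mathbb{Q})$. The key step is to produce a $G$-invariant full-rank lattice inside $\mathbb{Q}^n$ by averaging over the finite group. Concretely I would put
\begin{equation*}
L=\sum_{g\in G}\rho(g)\,\mathbb{Z}^n\subset\mathbb{Q}^n.
\end{equation*}
Being a finite sum of full-rank lattices, $L$ is a finitely generated $\mathbb{Z}$-submodule of $\mathbb{Q}^n$; it is torsion-free, hence free, and since $\mathbb{Z}^n\subseteq L\subseteq\mathbb{Q}^n$ its rank is exactly $n$. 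By construction $\rho(g)L=L$ for every $g\in G$, so $G$ acts on $L$ through $\mathbb{Z}$-linear automorphisms.

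Choosing a $\mathbb{Z}$-basis of $L$ identifies $L$ with $\mathbb{Z}^n$ and expresses this action as a homomorphism $\tilde\rho\colon G\to\mathrm{GL}_n(\mathbb{Z})$. Since the inclusion $L\hookrightarrow\mathbb{Q}^n$ is $G$-equivariant, $\tilde\rho$ realises the same abstract action as $\rho$; equivalently, $\tilde\rho$ is conjugate to $\rho$ in $\mathrm{GL}_n(\mathbb{Q})$ by the change-of-basis matrix, so $\tilde\rho$ is again faithful. Hence $G\hookrightarrow\mathrm{GL}_n(\mathbb{Z})$ and $d_{\mathbb{Z}}(G)\le n=d_{\mathbb{Q}}(G)$, which closes the argument.

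I do not expect a genuine obstacle here, as the statement is essentially the standard fact that a finite group preserves a lattice in any rational representation. The only point requiring care is the verification that $L$ has rank exactly $n$, rather than something larger, so that the resulting integral representation has the same dimension; this is precisely where the two inclusions $\mathbb{Z}^n\subseteq L\subseteq\mathbb{Q}^n$ enter.
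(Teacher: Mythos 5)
Your proposal is correct and follows essentially the same route as the paper: the paper gets finiteness from the regular representation (on $\mathbb{Q}[G]$, where you use $\mathbb{Z}[G]$) and then quotes the standard fact that a finite subgroup of $\mathrm{GL}_n(\mathbb{Q})$ is conjugate to a subgroup of $\mathrm{GL}_n(\mathbb{Z})$, citing Serre. The only difference is that you prove this cited fact yourself, via the invariant full-rank lattice $L=\sum_{g\in G}\rho(g)\mathbb{Z}^n$, which makes the argument self-contained but is mathematically the same key step.
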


\begin{proof}
The group $G$ acts effectively on the vector space $\mathbb{Q[}G]$ by
permuting basis. This implies $d_{\mathbb{Q}}(G)<+\infty .$ It is well-known
that a finite subgroup in $\mathrm{GL}_{n}(\mathbb{Q})$ to conjugate to a
subgroup in $\mathrm{GL}_{n}(\mathbb{Z})$ (cf. \cite{se}, 1.3.1), which
gives $d_{\mathbb{Q}}(G)=d_{\mathbb{Z}}(G).$
\end{proof}

\begin{lemma}
\label{ke}Let $G$ be a group normally generated by a simple subgroup $H,$
i.e. every element $g\in G$ is a product of conjugates of elements in $H.$
When $n<d_{h}(H,\mathcal{M}),$ any group action of $G$ on $N^{n}$ by
homeomorphisms is trivial for any $N\in \mathcal{M}^{n}.$
\end{lemma}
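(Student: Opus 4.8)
The plan is to fix an arbitrary action $\rho \colon G \to \mathrm{Homeo}(N)$ with $N \in \mathcal{M}^{n}$ and to show that $\rho$ is trivial, exploiting the interplay between the simplicity of $H$ and the fact that $G$ is its normal closure. First I would restrict $\rho$ to the simple subgroup $H$ and examine the kernel $\ker(\rho|_{H})$. Being the kernel of a homomorphism, it is a normal subgroup of $H$, so by simplicity it is either trivial or all of $H$. This dichotomy is the engine of the whole argument.

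Next I would rule out the first alternative using the dimension hypothesis. If $\ker(\rho|_{H})$ were trivial, then $\rho|_{H}$ would be an injection $H \hookrightarrow \mathrm{Homeo}(N)$ with $N \in \mathcal{M}^{n}$, which by the very definition of the minimal acting dimension forces $d_{s}(H,\mathcal{M}) \le n$. Combined with the inequality $d_{h} \le d_{s}$ recorded in the Introduction, this yields $d_{h}(H,\mathcal{M}) \le n$, contradicting the hypothesis $n < d_{h}(H,\mathcal{M})$. (The case $d_{h}(H,\mathcal{M}) = \infty$ is covered as well, since then $d_{s}(H,\mathcal{M}) = \infty$, so that $d_{s}(H,\mathcal{M}) \le n$ is impossible.) Hence $\ker(\rho|_{H}) = H$, i.e. $H$ acts trivially.

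Finally I would propagate triviality from $H$ to all of $G$ via normal generation. Since $H \subseteq \ker\rho$ and $\ker\rho$ is a normal subgroup of $G$, it contains the normal closure of $H$, which is $G$ by hypothesis; concretely, for every $g \in G$ and $h \in H$ one has $\rho(ghg^{-1}) = \rho(g)\rho(h)\rho(g)^{-1} = \mathrm{id}$, and since each element of $G$ is a product of such conjugates, $\rho(G) = \{\mathrm{id}\}$. Thus $\rho$ is trivial. The argument is short, and the only point requiring genuine care is the direction of the comparison between the homeomorphism and diffeomorphism minimal dimensions, so that a \emph{faithful} homeomorphism action in dimension $n$ really does violate the bound $n < d_{h}(H,\mathcal{M})$; once that inequality is invoked correctly, the simplicity dichotomy and the normal-generation step do the rest.
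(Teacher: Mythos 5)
Your proof is correct and follows essentially the same route as the paper's: restrict the action to $H$, use simplicity to force the kernel of the restriction to be all of $H$ (a faithful action being impossible by the minimality of $d_{h}(H,\mathcal{M})$), and then conclude $\ker\rho=G$ from normal generation. Your extra care with the comparison $d_{h}\leq d_{s}$ is a harmless elaboration of the step the paper compresses into ``$f|_{H}$ is not injective,'' so there is nothing to add.
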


\begin{proof}
Let $f:G\rightarrow \mathrm{Homeo}(N)$ be a group homomorphism. Since the
restriction $f|_{H}$ is not injective, the subgroup $(\ker f)\cap H$ is not
trivial, which is the whole group $H$ considering the fact that $H$ is
simple. Since $G$ is normally generated by $H,$ we have $\ker f=G.$
\end{proof}

\begin{lemma}
\label{prodlem}Suppose that $\mathcal{M}$ is closed under taking products,
i.e. any $N^{n}\in \mathcal{M}^{n},N^{m}\in \mathcal{M}^{n}$ implies the
product $N^{n}\times N^{m}\in \mathcal{M}^{n+m}.$ For any two groups $%
G_{1},G_{2},$ we have that 
\begin{equation*}
d_{h}(G_{1}\times G_{2},\mathcal{M})\leq d_{h}(G_{1},\mathcal{M}%
)+d_{h}(G_{2},\mathcal{M})
\end{equation*}
and 
\begin{equation*}
d_{s}(G_{1}\times G_{2},\mathcal{M})\leq d_{s}(G_{1},\mathcal{M}%
)+d_{s}(G_{2},\mathcal{M}).
\end{equation*}
\end{lemma}

\begin{proof}
Suppose that there are injections $f_{1}:G_{1}\hookrightarrow \mathrm{Homeo}%
(N_{1})$ and $f_{1}:G_{2}\hookrightarrow \mathrm{Homeo}(N_{2}).$ There is an
injection $G_{1}\times G_{2}\hookrightarrow \mathrm{Homeo}(N_{1})\times 
\mathrm{Homeo}(N_{2})\hookrightarrow \mathrm{Homeo}(N_{1}\times N_{2}).$
\end{proof}

\begin{example}
The inequality in Lemma \ref{prodlem} may be strict. For example, any cyclic
group $\mathbb{Z}/n$ acts freely on the circle $S^{1}$ by rotations. This
implies that $d_{h}(\mathbb{Z}/n,\mathcal{FM})=1.$ Since $d_{\mathbb{R}}(%
\mathbb{Z}/n)\geq 2$ when $n>2,$ the acting dimension on flat manifolds may
be strictly less than the minimal faithful real representation dimension.
\end{example}

The following gives a relation between the integral representation dimension
and the acting dimension on flat manifolds.

\begin{lemma}
\label{le1}Suppose that $\mathcal{M}$ contains the set of tori. For any
group $G,$ we have that $d_{s}(G,\mathcal{M})\leq d_{\mathbb{Z}}(G).$
\end{lemma}

\begin{proof}
Since the general linear group $\mathrm{GL}_{n}(\mathbb{Z})$ acts on the
Euclidean space $\mathbb{R}^{n}$ preserving the integral lattice $\mathbb{Z}%
^{n},$ there is an induced faithful action on the torus $T^{n}=\mathbb{R}%
^{n}/\mathbb{Z}^{n}.$ This implies that $d_{s}(G,\mathcal{M})\leq d_{\mathbb{%
Z}}(G).$
\end{proof}

\section{Symmetries of flat manifolds}

Let $M^{n}$ be a closed flat manifold, i.e. a closed manifold finitely
covered by the torus $T^{n}.$ Suppose that a finite group $G$ acts
effectively on $M$ by homeomorphisms. Lee and Raymond \cite{lr} prove that $G
$ can act on $M^{n}$ by affine diffeomorphisms$.$ This implies that 
\begin{equation*}
d_{h}(G,\mathcal{FM})=d_{s}(G,\mathcal{FM}).
\end{equation*}%
Recall from \cite{lr} the definitions of abstract crystallographic and
Bieberbach groups as follows. An abstract crystallographic group of rank $n$
is any group which is isomorphic to a uniform discrete subgroup of the
Euclidean group $E(n)=\mathbb{R}^{n}\rtimes O(n)$ of motions on the
Euclidean space $\mathbb{R}^{n}$. An abstract Bieberbach group of dimension $%
n$ is any torsion-free crystallographic group of rank $n.$ The classical
Bieberbach theorems imply that $E$ is an abstract crystallographic group of
rank $n$ if and only if it contains a finite-index normal free abelian group
of rank $n$ which is maximal abelian. The group $E$ is an abstract
Bieberbach group of dimension $n$ if and only if it is a torsion-free
crystallographic group of rank $n$. In both cases, the finite quotient group
acts faithfully on $\mathbb{Z}^{n}$. The quotient group is called the
holonomy group when $E$ is torsion-free. For example, there is a short exact
sequence%
\begin{equation*}
1\rightarrow \mathbb{Z}^{n}\rightarrow \pi _{1}(M)\rightarrow \Phi
\rightarrow 1,
\end{equation*}%
where $\mathbb{Z}^{n}$ is the maximal normal abelian subgroup of $\pi
_{1}(M)<\mathbb{R}^{n}\rtimes O(n)$ and $\Phi $ is the holonomy group. For
an element $g=(a,b)\in \mathbb{R}^{n}\rtimes O(n),$ we call $b$ the rotation
part and $a$ the translation part. For more details, see the book of Charlap 
\cite{ch}.

Let $G^{\ast }$ be the group consisting of all liftings of elements in $G$
to the universal cover $\tilde{M}=\mathbb{R}^{n}$. There is an short exact
sequence%
\begin{equation*}
1\rightarrow \pi _{1}(M)\rightarrow G^{\ast }\rightarrow G\rightarrow 1.
\end{equation*}%
Lee and Raymond \cite{lr} (Prop. 2) prove that $G^{\ast }$ is an abstract
crystallographic group of rank $n$. Moreover, the centralizer $C_{G^{\ast }}(%
\mathbb{Z}^{n})$ is the unique maximal abelian normal subgroup of $G^{\ast }$%
, where $\mathbb{Z}^{n}$ is the maximal normal abelian subgroup of $\pi
_{1}(M).$

\begin{lemma}
\label{flatlem}Let $G$ be a finite group acting effectively on a closed flat
manifold $M.$ There is a commutative diagram%
\begin{equation*}
\begin{array}{ccccccccc}
1 & \rightarrow & \mathbb{Z}^{n} & \rightarrow & C_{G^{\ast }}(\mathbb{Z}%
^{n}) & \rightarrow & A & \rightarrow & 1 \\ 
&  & \downarrow &  & \downarrow &  & f\downarrow &  &  \\ 
1 & \rightarrow & \pi _{1}(M) & \overset{i}{\rightarrow } & G^{\ast } & 
\rightarrow & G & \rightarrow & 1 \\ 
&  & \downarrow &  & \downarrow &  & q\downarrow &  &  \\ 
1 & \rightarrow & \Phi & \overset{g}{\rightarrow } & \Phi ^{\ast } & 
\rightarrow & \Phi ^{\ast }/\Phi & \rightarrow & 1,%
\end{array}%
\end{equation*}%
where $G^{\ast }$ is the group of all liftings of elements in $G$ to the
universal cover $\tilde{M}$. All the rows and columns are exact.
\end{lemma}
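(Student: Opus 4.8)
The plan is to build the commutative diagram by identifying its three rows with known exact sequences and then checking that the connecting vertical maps exist and are compatible. The middle row is already given to us: it is the defining extension
$$1 \rightarrow \pi_1(M) \overset{i}{\rightarrow} G^{\ast} \rightarrow G \rightarrow 1$$
coming from the group of liftings, and by Lee--Raymond $G^{\ast}$ is an abstract crystallographic group of rank $n$. The bottom row should be the holonomy extension of $G^{\ast}$: since $G^{\ast}$ is crystallographic of rank $n$, it contains a unique maximal normal abelian subgroup, and here that subgroup is the centralizer $C_{G^{\ast}}(\mathbb{Z}^n)$, with quotient a finite group $\Phi^{\ast} < \mathrm{GL}_n(\mathbb{Z})$ acting faithfully. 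So I would \emph{define} $\Phi^{\ast} := G^{\ast}/C_{G^{\ast}}(\mathbb{Z}^n)$, giving the middle column $1 \rightarrow C_{G^{\ast}}(\mathbb{Z}^n) \rightarrow G^{\ast} \rightarrow \Phi^{\ast} \rightarrow 1$. Similarly $\Phi = \pi_1(M)/\mathbb{Z}^n$ is the holonomy group of $M$, giving the left column.

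Next I would construct the top and bottom rows as the restrictions/quotients induced by the middle column applied to the middle row. The key point is that $\mathbb{Z}^n$, the maximal normal abelian subgroup of $\pi_1(M)$, coincides with $\pi_1(M) \cap C_{G^{\ast}}(\mathbb{Z}^n)$: indeed $\mathbb{Z}^n$ is abelian and normal in $G^{\ast}$ (Lee--Raymond identify it as the maximal abelian normal subgroup of $\pi_1(M)$ sitting inside $G^{\ast}$), so it centralizes itself and lands in the centralizer, while conversely any element of $\pi_1(M)$ commuting with $\mathbb{Z}^n$ must lie in the maximal abelian subgroup $\mathbb{Z}^n$ by Bieberbach's characterization. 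This gives the injection of left columns into middle columns and lets me define the top row as
$$1 \rightarrow \mathbb{Z}^n \rightarrow C_{G^{\ast}}(\mathbb{Z}^n) \rightarrow A \rightarrow 1$$
where $A := C_{G^{\ast}}(\mathbb{Z}^n)/\mathbb{Z}^n$. Then $f: A \rightarrow G$ is the map induced by $C_{G^{\ast}}(\mathbb{Z}^n) \hookrightarrow G^{\ast} \twoheadrightarrow G$, and $q: G \rightarrow \Phi^{\ast}/\Phi$ is induced from $G^{\ast} \twoheadrightarrow \Phi^{\ast} \twoheadrightarrow \Phi^{\ast}/\Phi$. Commutativity of every square is then formal, since all maps descend from the single group $G^{\ast}$ and its subquotients.

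The routine verifications are exactness of the newly-defined rows and columns, which follow from the nine lemma (the $3 \times 3$ lemma) once the middle column and middle row are known exact and the inclusions of kernels are established. The \textbf{main obstacle} I expect is showing that the bottom row is genuinely exact with the stated groups — that is, that the image of $\Phi$ under the induced map into $\Phi^{\ast}$ is normal with quotient exactly $\Phi^{\ast}/\Phi$, and that $g: \Phi \rightarrow \Phi^{\ast}$ is injective. This amounts to verifying that $\mathbb{Z}^n$ is the \emph{same} lattice for both $\pi_1(M)$ and $G^{\ast}$, i.e. that the maximal abelian normal subgroup of $G^{\ast}$ restricts correctly; this is exactly the content of the Lee--Raymond statement quoted in the excerpt that $C_{G^{\ast}}(\mathbb{Z}^n)$ is the unique maximal abelian normal subgroup of $G^{\ast}$. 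Once this identification is in hand, the whole diagram assembles by diagram-chasing with no further geometric input, so I would present the proof as: fix the middle row and column, identify the common lattice $\mathbb{Z}^n$, and invoke the nine lemma to fill in the remaining rows and columns.
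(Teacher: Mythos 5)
Your proposal is correct and follows essentially the same route as the paper: the same decomposition (Lee--Raymond's result giving the middle column with translation subgroup $C_{G^{\ast}}(\mathbb{Z}^{n})$, the Bieberbach left column, the definition $A=C_{G^{\ast}}(\mathbb{Z}^{n})/\mathbb{Z}^{n}$), and the same key geometric input, namely that $\mathbb{Z}^{n}$ is self-centralizing in $\pi_{1}(M)$ (faithfulness of the holonomy), which is precisely what the paper uses to prove injectivity of $f$ and $g$. The only cosmetic difference is that you package the two injectivity claims into the single identity $\pi_{1}(M)\cap C_{G^{\ast}}(\mathbb{Z}^{n})=\mathbb{Z}^{n}$ and delegate the exactness of the right-hand column to the $3\times 3$ lemma (which does hold for groups in the form you need), whereas the paper carries out the identical snake-lemma-style chase explicitly.
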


\begin{proof}
We already have the middle horizontal exact sequence. Let $\mathbb{Z}^{n}$
be the pure translation subgroup of the Bieberbach group $\pi _{1}(M)$ with
holonomy group $\Phi .$ This gives the first vertical exact sequence. It is
already known that $G^{\ast }$ is isomorphic to an abstract Crystallographic
group with the pure translation subgroup $C_{G^{\ast }}(\mathbb{Z}^{n})$ by 
\cite{lr} (Prop. 2 and its proof). Take $\Phi ^{\ast }$ as the holonomy
group. We then obtain the second vertical exact sequence. Define $A$ as the
quotient group $C_{G^{\ast }}(\mathbb{Z}^{n})/\mathbb{Z}^{n}$ to give the
first horizontal exact sequence. Since $%
\begin{array}{ccc}
\mathbb{Z}^{n} & \rightarrow  & C_{G^{\ast }}(\mathbb{Z}^{n})%
\end{array}%
$ is injective, we have a group homomorphism 
\begin{equation*}
g:\Phi \rightarrow \Phi ^{\ast }.
\end{equation*}
We prove that $g$ is injective as follows. For any $x\in \Phi $ with $%
g(x)=1\in \Phi ^{\ast },$ choose $y\in \pi _{1}(M)$ as a preimage of $x.$
Since $i(y)$ is mapped to the identity in $\Phi ^{\ast },$ we see that $%
i(y)\in C_{G^{\ast }}(\mathbb{Z}^{n}).$ However, the holonomy group $\Phi $
acts effectively on $\mathbb{Z}^{n},$ which implies the rotation part of $%
i(y)$ is trivial. Thus $x=1.$ It is obvious that $g(\Phi )$ is normal in $%
\Phi ^{\ast },$ which gives the third horizontal exact sequence. We then
have a surjective group homomorphism 
\begin{equation*}
q:G\rightarrow \Phi ^{\ast }/\Phi .
\end{equation*}
Since the map $\mathbb{Z}^{n}\rightarrow \pi _{1}(M)$ is injective, there is
an induced group homomorphism $f:A\rightarrow G.$ We prove that $f$ is
injective as follows. For any $x\in A$ with $f(x)=1\in G,$ choose $y\in
C_{G^{\ast }}(\mathbb{Z}^{n})$ as a preimage. Since $y$ is mapped to the
identity in $G,$ there is an element $z\in \pi _{1}(M)$ such that $i(z)=y.$
Since $z$ commutes with $\mathbb{Z}^{n},$ the rotation part of $z$ is
trivial. This implies that $y\in \mathbb{Z}^{n}$ and $x=1.$ The third
vertical sequence is exact at $G\ $by an argument of Snake lemma as follows.
For any $x\in G$ with $q(x)=1\in \Phi ^{\ast }/\Phi ,$ choose $y\in G^{\ast }
$ as a preimage of $x.$ Then the image of $y$ in $\Phi ^{\ast }$ actually
lies in $g(\Phi ).$ Denote the image by $g(z)$ for some $z\in \Phi .$ Choose 
$y_{1}\in \pi _{1}(M)$ as a preimage of $z.$ Then $i(y_{1})$ and $y$ have
the same image in $\Phi ^{\ast }$ and thus $i(y_{1})^{-1}y\in C_{G^{\ast }}(%
\mathbb{Z}^{n}).$ The image of $i(y_{1})^{-1}y$ in $A$ is mapped to $x.$ The
proof is finished.
\end{proof}

\bigskip

Recall from \cite{lr} that an abstract kernel $(G,\pi ,\varphi )$ is a group
homomorphism $\varphi :G\rightarrow \mathrm{Out}(\pi )$ for some fundamental
group $\pi =\pi _{1}(M)$ of a closed flat manifold $M$. A geometric
realization of this abstract kernel is a group homomorphism $\varphi
^{\prime }:G\rightarrow \mathrm{Homeo}(M)$, where $\mathrm{Homeo}(M)$ is the
group of homeomorphisms of $M$, so that $\varphi ^{\prime }$ composed with
the natural homomorphism $\mathrm{Homeo}(M)\rightarrow \mathrm{Out}(\pi )$
agrees with $\varphi $. An extension $E$ of $\pi $ by a group $G$ is said to
be admissible if in the induced diagram%
\begin{equation*}
\begin{array}{ccccccccc}
1 & \rightarrow & \mathbb{\pi } & \rightarrow & E & \rightarrow & G & 
\rightarrow & 1 \\ 
&  & \downarrow &  & \bar{\varphi}\downarrow &  & \varphi \downarrow &  & 
\\ 
1 & \rightarrow & \mathrm{Inn}(\pi ) & \overset{i}{\rightarrow } & \mathrm{%
Aut}(\pi ) & \rightarrow & \mathrm{Out}(\pi ) & \rightarrow & 1,%
\end{array}%
\end{equation*}%
the map $\bar{\varphi}$ is injective on any finite subgroup of $E.$\ Lee and
Raymond \cite{lr} (Theorem 3) prove the following.

\begin{lemma}
\label{lr}Let $M(\pi )$ be a closed Riemannian flat manifold. If an abstract
kernel $(G,\pi ,\varphi )$ admits an admissible extension $E$, then there is
a geometric realization of this extension by an effective affine action of $%
G $ on $M(\pi )$ which is affinely equivalent to an isometric action on an
affinely equivalent flat manifold $M(\theta (\pi ))$. Furthermore, the
lifting of this affine action to $\tilde{M}(\pi )$ induce the same
automorphisms of $\pi $ as $E$.
\end{lemma}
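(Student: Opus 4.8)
The plan is to promote the abstract extension $E$ to a concrete crystallographic group, realize it as a uniform lattice in $E(n)$ by the Bieberbach theorems already recalled in this section, and then pass to the quotient. Throughout, let $\mathbb{Z}^{n}$ denote the pure translation subgroup of the Bieberbach group $\pi$. Since $\mathbb{Z}^{n}$ is characteristic in $\pi$ and $\pi$ is normal in $E$, the subgroup $\mathbb{Z}^{n}$ is normal in $E$ and of finite index (as $G=E/\pi$ is finite), so $E$ is a finitely generated virtually free-abelian group.

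The decisive step is to show that $E$ is an \emph{abstract crystallographic group of rank $n$}, and this is exactly where admissibility enters. I would first observe that $E$ has no nontrivial finite normal subgroup: if $N\triangleleft E$ is finite, then $[N,\pi]\subseteq N\cap\pi=1$, because $\pi$ is torsion-free and both $N,\pi$ are normal, so $N$ centralizes $\pi$ and hence lies in the kernel of $\bar{\varphi}:E\to\mathrm{Aut}(\pi)$; admissibility, which makes $\bar{\varphi}$ injective on finite subgroups, then forces $N=1$. Next I would take the centralizer $C=C_{E}(\mathbb{Z}^{n})$ as the candidate maximal abelian normal subgroup, in parallel with Lemma~\ref{flatlem}. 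The group $C$ is normal of finite index and contains the central subgroup $\mathbb{Z}^{n}$ of finite index, so $C$ is center-by-finite; by Schur's theorem its commutator subgroup is finite, and the commutator subgroup together with the torsion subgroup are characteristic in $C$, hence normal in $E$, hence trivial by the previous remark. Thus $C$ is free abelian of rank $n$ and maximal abelian (self-centralizing), and by the characterization recalled above $E$ is isomorphic to a uniform discrete subgroup $\bar{E}<E(n)=\mathbb{R}^{n}\rtimes O(n)$.

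With this embedding in hand the geometry is immediate. The image $\bar{\pi}$ of $\pi$ is torsion-free, so it acts freely and properly discontinuously by isometries on $\mathbb{R}^{n}$, and $M(\theta(\pi))=\mathbb{R}^{n}/\bar{\pi}$ is a closed flat manifold on which $G\cong\bar{E}/\bar{\pi}$ acts isometrically; this action is automatically effective, since a lift fixing every $\bar{\pi}$-orbit must itself lie in $\bar{\pi}$. Because $M(\pi)$ and $M(\theta(\pi))$ are closed flat manifolds with the same fundamental group $\pi$, the rigidity half of the Bieberbach theorems supplies an affine equivalence $\theta$ between them, and conjugating the isometric action across $\theta$ yields the asserted effective \emph{affine} action of $G$ on $M(\pi)$. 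Finally, the group of all lifts of this action to $\tilde{M}(\pi)=\mathbb{R}^{n}$ is precisely an affine conjugate of $\bar{E}\cong E$, so conjugation of $\bar{\pi}\cong\pi$ by these lifts reproduces exactly the automorphisms encoded by the extension $E$, which gives the last assertion.

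The main obstacle is the crystallographic recognition of the second paragraph: one must verify not only that $E$ is virtually $\mathbb{Z}^{n}$ but that its maximal abelian normal subgroup is free abelian of the \emph{full} rank $n$ and self-centralizing, and it is the admissibility hypothesis, through the elimination of finite normal subgroups, that makes this work while simultaneously guaranteeing that the realized $G$-action is effective rather than an action with kernel. Since the statement is Theorem~3 of Lee and Raymond \cite{lr}, the complete argument, carried out there via the Seifert construction, may be invoked directly; the sketch above records the self-contained crystallographic route one would otherwise take.
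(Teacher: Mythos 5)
Your proposal is correct, but it is worth noting what the paper's ``proof'' actually is: the paper does not prove this lemma at all --- it is imported verbatim as Theorem~3 of Lee and Raymond \cite{lr}, so the paper's justification is exactly the citation you give in your closing paragraph. What you add beyond the paper is the self-contained crystallographic sketch, and that sketch is essentially sound and is a genuinely different route from Lee--Raymond's own argument (which proceeds via the Seifert construction). Your key chain is the right one: admissibility kills every finite normal subgroup $N\triangleleft E$, since $[N,\pi ]\subseteq N\cap \pi =1$ places $N$ inside $\ker \bar{\varphi}=C_{E}(\pi )$ where injectivity of $\bar{\varphi}$ on finite subgroups applies; then Schur's theorem plus the fact that characteristic subgroups of the normal subgroup $C_{E}(\mathbb{Z}^{n})$ are normal in $E$ shows $C_{E}(\mathbb{Z}^{n})$ is free abelian of rank $n$, self-centralizing and of finite index, so the algebraic characterization of abstract crystallographic groups recalled at the start of the section makes $E$ a uniform lattice in $E(n)$, and Bieberbach rigidity transports the resulting isometric action to an affine action on $M(\pi )$. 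Two steps are compressed but not wrong: effectiveness of the $\bar{E}/\bar{\pi}$-action on $\mathbb{R}^{n}/\bar{\pi}$ is cleanest via the covering-space fact that a homeomorphism of $\mathbb{R}^{n}$ covering the identity of $\mathbb{R}^{n}/\bar{\pi}$ is a deck transformation and hence lies in $\bar{\pi}$; and for the final claim about induced automorphisms you must apply the second Bieberbach theorem to the \emph{specific} isomorphism $\pi \cong \bar{\pi}$ arising from your embedding, so that the affine conjugation identifies the two extensions compatibly with the fixed identification $\pi _{1}(M(\pi ))=\pi $ --- otherwise the realized abstract kernel could differ from $\varphi $ by an automorphism of $\pi $. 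For the purposes of this paper, your fallback to the citation is all that is needed, and it matches what the paper does.
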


\begin{proof}[Proof of Theorem \protect\ref{th3}]
Suppose that the finite group $G$ acts effectively on the flat manifold $M.$
By Lemma \ref{flatlem}, there is a short exact sequence%
\begin{equation*}
1\rightarrow A\rightarrow G\rightarrow \Phi ^{\ast }/\Phi \rightarrow 1,
\end{equation*}%
where $\Phi $ is the holonomy group of $M$ and $\Phi ^{\ast }$ is the
holonomy group of the lifting group $G^{\ast }.$ Choose $Q=\Phi ^{\ast
}/\Phi .$ Since $G^{\ast }$ is an abstract crystallographic group, the
holonomy group $\Phi ^{\ast }$ acts effectively on $C_{G^{\ast }}(\mathbb{Z}%
^{n}).$ Note that $C_{G^{\ast }}(\mathbb{Z}^{n})$ is isomorphic to $\mathbb{Z%
}^{n}.$ This implies that $\Phi ^{\ast }$ is a subgroup of $\mathrm{GL}_{n}(%
\mathbb{Z}).$ This proves (i). Lemma \ref{flatlem} also implies (ii).

Conversely, suppose that an exact sequence satisfies (i) and (ii). Denote by 
$\pi :=\ker f$ and $N=:\ker \alpha .$ Since $A$ is finite, the group
\thinspace $N$ is isomorphic to $\mathbb{Z}^{n}.$ We prove that $N$ is
normal in $\pi $ as follows. For any $g\in \pi ,n\in N,$ the element $%
gng^{-1}\in G^{\ast }$ is mapped to $1\in \Phi ^{\ast }.$ Therefore, $%
gng^{-1}\in \mathbb{Z}^{n}.$ Since $gng^{-1}$ has image \ $1\in A,$ we get
that $gng^{-1}\in N.$ Denote by $\Phi _{1}:=\pi /N.$ Since $N$ is a subgroup
of $\mathbb{Z}^{n},$ the induced map $\Phi _{1}\rightarrow \Phi ^{\ast }$ is
injective as follows. For any $g\in \Phi _{1}$ with trivial image in $\Phi
^{\ast },$ let $x\in \pi $ be a preimage. Then $x\in \mathbb{Z}^{n}.$ Since $%
x$ is mapped to be identity in $G,$ we see that $x\in N$ and $g=1.$ Now we
have a commutative diagram%
\begin{equation*}
\begin{array}{ccccccccc}
1 & \rightarrow & N & \rightarrow & \mathbb{Z}^{n} & \rightarrow & A & 
\rightarrow & 1 \\ 
&  & \downarrow &  & \downarrow &  & \downarrow &  &  \\ 
1 & \rightarrow & \pi & \rightarrow & G^{\ast } & \rightarrow & G & 
\rightarrow & 1 \\ 
&  & \downarrow &  & \downarrow &  & \downarrow &  &  \\ 
1 & \rightarrow & \Phi _{1} & \rightarrow & \Phi ^{\ast } & \rightarrow & Q
& \rightarrow & 1.%
\end{array}%
\end{equation*}%
We prove the third horizontal sequence is exact at $\Phi ^{\ast }$ as
follows. For any $g\in \Phi ^{\ast }$ with trivial image in $Q,$ let $x\in
G^{\ast }$ be a preimage, which is mapped to be an element $y\in A<G.$
Choose $z\in \mathbb{Z}^{n}$ as a preimage of $y.$ Note that $z^{-1}x\in \pi
.$ The image of $z^{-1}x$ in $\Phi _{1}$ is mapped to be $g.$ This also
proves $\Phi _{1}=\Phi .$

We prove that $\pi $ is the fundamental group of a closed flat manifold \ $%
M^{n}$ with holonomy group $\Phi .$ By a classical result of Auslander and
Kuranishi (cf. \cite{ch}, Theorem 1.1 in Chapter III), it suffices to prove
that $\pi $ is an abstract Bieberbach group with $N$ as its maximal abelian
normal subgroup. This is equivalent to prove that $\Phi $ acts effectively
on $N.$ Suppose that some $1\neq g\in \Phi $ acts trivially on $N.$ Since $A$
is finite, there is a positive integer $k$ such that $ka\in N$ for any $a\in 
\mathbb{Z}^{n}.$ Then $g(ka)=kg(a)$ and thus $g(a)=a,$ which implies that $g$
acts trivially on $\mathbb{Z}^{n}.$ This is a contradiction to the fact that 
$\Phi ^{\ast }$ is a subgroup of $\mathrm{GL}_{n}(\mathbb{Z}).$

We check that the middle horizontal exact sequence is an admissible
extension of $\pi $ as follows. Since $\pi $ is normal in $G^{\ast },$ there
is a commutative diagram%
\begin{equation*}
\begin{array}{ccccccccc}
1 & \rightarrow & \pi & \overset{i}{\rightarrow } & G^{\ast } & \rightarrow
& G & \rightarrow & 1 \\ 
&  & \downarrow &  & \downarrow \phi &  & \downarrow &  &  \\ 
1 & \rightarrow & \mathrm{Inn}(\pi ) & \rightarrow & \mathrm{Aut}(\pi ) & 
\rightarrow & \mathrm{Out}(\pi ) & \rightarrow & 1.%
\end{array}%
\end{equation*}%
Note that $\ker \phi =C_{G^{\ast }}(\pi ),$ the centralizer of $\pi .$ We
claim that $\ker \phi $ is a subgroup of $\mathbb{Z}^{n}<G^{\ast }.$ Suppose
that there is an element $g\in C_{G^{\ast }}(\pi )\backslash \mathbb{Z}^{n}.$
The image $\bar{g}$ of $g$ in $\Phi ^{\ast }$ is not trivial. Since $g$
commutes with elements in $\pi ,$ the action of $g$ on $N$ is trivial. The
same argument as that in the previous paragraph shows that the action of $g$
on $\mathbb{Z}^{n}$ is trivial. This is a contradiction to the fact that $%
\Phi ^{\ast }$ acts effectively on $\mathbb{Z}^{n}.$ Since $\ker \phi $ is
torsion-free, the exact sequence is an admissible extension. By Lemma \ref%
{lr}, the group $G$ acts effectively on the closed flat manifold $M$ with
holonomy group $\Phi .$ The proof is finished.
\end{proof}

For finite groups acting on tori, we have a simple characterization.

\begin{theorem}
\label{torus}A finite group $G$ acts effectively on a torus $T^{n}$ if and
only if there is an abelian extension 
\begin{equation*}
1\rightarrow A\rightarrow G\rightarrow Q\rightarrow 1
\end{equation*}%
such that

\begin{enumerate}
\item[(i)] $Q<\mathrm{GL}_{n}(\mathbb{Z});$

\item[(ii)] there is an $Q$-equivariant surjection $\ \alpha :\mathbb{Z}%
^{n}\twoheadrightarrow A$ and the cohomology class representing of the
extension lies in the image $\func{Im}(H^{2}(Q;\mathbb{Z}^{n})\rightarrow
H^{2}(Q;A)).$
\end{enumerate}
\end{theorem}

\begin{proof}
\bigskip The necessary condition follows Theorem \ref{th3} easily.
Conversely, suppose that there is a commutative diagram%
\begin{equation*}
\begin{array}{ccccccccc}
1 & \rightarrow & \mathbb{Z}^{n} & \rightarrow & G^{\ast } & \rightarrow & Q
& \rightarrow & 1 \\ 
&  & \alpha \downarrow &  & f\downarrow &  & =\downarrow &  &  \\ 
1 & \rightarrow & A & \overset{i}{\rightarrow } & G & \rightarrow & Q & 
\rightarrow & 1.%
\end{array}%
\end{equation*}%
Note that $\ker f\cong \ker \alpha \cong \mathbb{Z}^{n}$ is a torsion-free
group. The proof is finished by applying Theorem \ref{th3} again.
\end{proof}

\begin{corollary}
\label{a4}\bigskip Let $A_{4}$ be the alternating group. Then $d_{\mathbb{Z}%
}(A_{4})=3$ and $d_{h}(A_{4},\mathcal{FM})=2.$
\end{corollary}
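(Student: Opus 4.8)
The plan is to prove the two equalities separately, establishing a matching lower and upper bound in each case.

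For $d_{\mathbb{Z}}(A_{4})=3$ the upper bound is immediate: $A_{4}$ acts on the sum-zero sublattice $\{x\in \mathbb{Z}^{4}:\sum x_{i}=0\}\cong \mathbb{Z}^{3}$ by permuting coordinates, and this action is faithful, so $A_{4}\hookrightarrow \mathrm{GL}_{3}(\mathbb{Z})$. For the lower bound I would show $A_{4}\not\hookrightarrow \mathrm{GL}_{2}(\mathbb{Z})$. Using the earlier lemma $d_{\mathbb{Q}}=d_{\mathbb{Z}}$, it suffices to rule out a faithful $2$-dimensional rational representation. The irreducible $\mathbb{Q}$-representations of $A_{4}$ have dimensions $1,2,3$, where the $1$- and $2$-dimensional ones factor through $A_{4}/V\cong C_{3}$ (here $V\cong C_{2}\times C_{2}$ is the Klein four normal subgroup) and hence kill $V$; only the $3$-dimensional irreducible is faithful. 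Since a faithful representation must have trivial total kernel, it must contain the $3$-dimensional constituent, forcing dimension $\geq 3$. This gives $d_{\mathbb{Z}}(A_{4})=3$.

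For $d_{h}(A_{4},\mathcal{FM})=2$, the lower bound $d_{h}\geq 2$ comes from the observation that the only closed flat $1$-manifold is $S^{1}$, and $A_{4}$ cannot act effectively on $S^{1}$: the orientation-preserving part $A_{4}\cap \mathrm{Homeo}^{+}(S^{1})$ has index $1$ or $2$ in $A_{4}$, hence order $12$ or $6$, but every finite subgroup of $\mathrm{Homeo}^{+}(S^{1})$ is cyclic, while $A_{4}$ has no cyclic subgroup of order $6$ or $12$.

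For the upper bound I would exhibit an effective $A_{4}$-action on the torus $T^{2}$, so that $d_{h}\leq d_{s}\leq 2$ (recall $d_{h}=d_{s}$ on $\mathcal{FM}$). I would apply Theorem \ref{torus} to the split extension $1\rightarrow V\rightarrow A_{4}\rightarrow C_{3}\rightarrow 1$, taking $A=V$ and $Q=C_{3}$. Condition (i) holds since $C_{3}$ embeds in $\mathrm{GL}_{2}(\mathbb{Z})$ via an order-$3$ matrix $B$. For condition (ii), reduction modulo $2$ gives a surjection $\alpha :\mathbb{Z}^{2}\twoheadrightarrow \mathbb{F}_{2}^{2}\cong V$, and one checks that $B\bmod 2$ is the order-$3$ element of $\mathrm{GL}_{2}(\mathbb{F}_{2})$ cyclically permuting the three nonzero vectors, matching the conjugation action of $C_{3}$ on $V$ inside $A_{4}$, so $\alpha$ is $C_{3}$-equivariant. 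The cohomological condition is where I expect to focus the argument, but it turns out to be automatic: since $|C_{3}|=3$ and $|V|=4$ are coprime, $H^{2}(C_{3};V)=0$, so the split extension class is $0$ and lies in $\mathrm{Im}(H^{2}(C_{3};\mathbb{Z}^{2})\rightarrow H^{2}(C_{3};V))$. Equivalently and more transparently, I can write the action directly: let $V$ act on $T^{2}=\mathbb{R}^{2}/\mathbb{Z}^{2}$ by the translations through the three nonzero $2$-torsion points and let $C_{3}$ act by $B$; a direct check that $B$ permutes these three $2$-torsion points in a $3$-cycle shows the generated affine group is exactly $V\rtimes C_{3}=A_{4}$, acting effectively. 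Combining the bounds yields $d_{h}(A_{4},\mathcal{FM})=2$.

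The routine classification facts (subgroups of $\mathrm{GL}_{2}(\mathbb{Z})$, cyclic symmetry of $S^{1}$) are the easy parts; the only step requiring genuine care is the torus construction, and the key simplification there is the coprimality $\gcd(3,4)=1$, which both makes $\alpha$ available as a mod-$2$ reduction and kills the cohomological obstruction.
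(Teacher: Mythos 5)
Your proposal is correct and follows essentially the same route as the paper: the standard faithful embedding $A_{4}\hookrightarrow \mathrm{GL}_{3}(\mathbb{Z})$ plus the representation-theoretic lower bound for $d_{\mathbb{Z}}(A_{4})=3$, the classification of finite groups acting effectively on $S^{1}$ for $d_{h}\geq 2$, and an application of Theorem \ref{torus} to the split extension $1\rightarrow (\mathbb{Z}/2)^{2}\rightarrow A_{4}\rightarrow \mathbb{Z}/3\rightarrow 1$ with the mod-$2$ reduction $\alpha$ and an order-$3$ matrix in $\mathrm{GL}_{2}(\mathbb{Z})$ for $d_{h}\leq 2$. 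The only cosmetic differences are your explicit affine model of the torus action and your remark that splitness (equivalently, coprimality of $3$ and $4$) makes the cohomological condition automatic, where the paper instead observes directly that $\alpha$ induces a map between the two split extensions.
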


\begin{proof}
Since the minimal nontrivial degree of irreducible representations of $A_{4}$
is $3$ and $A_{4}$ is a subgroup of $\mathrm{SL}_{3}(\mathbb{Z}),$ we get $%
d_{\mathbb{Z}}(A_{4})=3.$ Since $A_{4}$ is not isomorphic to a subgroup of a
dihedral group, we have $d_{h}(A_{4},\mathcal{FM})\geq 2,$ since any finite
group acting effectively on $S^{1}$ is either cyclic or dihedral. Note that $%
A_{4}\cong (\mathbb{Z}/2)^{2}\rtimes \mathbb{Z}/3,$ where $(\mathbb{Z}%
/2)^{2}=\langle (14)(23),(13)(24)\rangle $ and $\mathbb{Z}/3=\langle
(123)\rangle ,$ where $\mathbb{Z}/3$ acts on $(\mathbb{Z}/2)^{2}$ through
matrix 
\begin{equation*}
\begin{pmatrix}
0 & 1 \\ 
1 & 1%
\end{pmatrix}%
\in \mathrm{GL}_{2}(\mathbb{Z}/2).
\end{equation*}%
Take $Q=\mathbb{Z}/3.$ Define $G^{\ast }=\mathbb{Z}^{2}\rtimes \mathbb{Z}/3,$
where $\mathbb{Z}/3$ acts on the free abelian group $\mathbb{Z}^{2}$ through
matrix 
\begin{equation*}
\begin{pmatrix}
0 & -1 \\ 
1 & -1%
\end{pmatrix}%
\in \mathrm{GL}_{2}(\mathbb{Z}).
\end{equation*}%
Let 
\begin{equation*}
\alpha :\mathbb{Z}^{2}\rightarrow \mathbb{Z}/2
\end{equation*}
be the modulo 2 map. It is not hard to see that $\alpha $ is $Q$%
-equivariant. Moreover, the map $\alpha $ induces a map between the two
split extensions. Theorem \ref{torus} implies that $A_{4}$ acts effectively
on $T^{2},$ which proves $d_{h}(A_{4},\mathcal{FM})=2.$
\end{proof}

\bigskip 

\begin{proof}[Proof of Corollary \protect\ref{th4}]
By Theorem \ref{th3}, the group $G$ contains a normal abelian subgroup $A$
such that the quotient $G/A$ is a quotient group of a finite group in $%
\mathrm{GL}_{n}(\mathbb{Z}).$ Therefore, the cardinality $|G/A|\leq f(n).$
\end{proof}

\section{Actions of simple groups on flat manifolds}

\begin{lemma}
\label{simple}Let $G$ be a noncommutative simple group and $M^{n}$ a closed
flat manifold. Suppose that $G$ acts on $M$ effectively by homeomorphisms.
Then $G$ is isomorphic to the quotient group of a finite subgroup in $%
\mathrm{GL}_{n}(\mathbb{Z})$ by the holonomy group of $M.$
\end{lemma}

\begin{proof}
By Theorem \ref{th3}, there is an exact sequence 
\begin{equation*}
1\rightarrow A\rightarrow G\rightarrow \Phi ^{\ast }/\Phi \rightarrow 1.
\end{equation*}%
Since $G$ is a noncommutative simple group, the normal abelian subgroup $A$
is trivial. Therefore, $G$ is isomorphic to $\Phi ^{\ast }/\Phi .$
\end{proof}

\begin{corollary}
If a noncommutative simple group $G$ contains an element of prime order $%
p>n+1,$ then any group action of $G$ on a closed flat $n$-dimensional
manifold $M$ is trivial.
\end{corollary}

\begin{proof}
It is well known that the prime order of elements in $\mathrm{GL}_{n}(%
\mathbb{Z})$ is at most $n+1$ (cf. \cite{New}, p. 181, exercise 1). Lemma %
\ref{simple} proves the statement.
\end{proof}

Our main result in this section is the following.

\begin{theorem}
\label{th1}Let $G=A_{n+1}$ $(n\geq 4)$ be the alternating group. Then%
\begin{equation*}
d_{\mathbb{Z}}(G)=d_{h}(G,\mathcal{FM})=n,
\end{equation*}%
i.e. the minimal acting dimension $d_{h}(G,\mathcal{FM})$ of $G$ on closed
flat manifolds is the same as the minimal faithful integral representation
dimension $d_{\mathbb{Z}}(G).$
\end{theorem}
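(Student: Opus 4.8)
The plan is to sandwich each of $d_{\mathbb Z}(A_{n+1})$ and $d_h(A_{n+1},\mathcal{FM})$ between equal upper and lower bounds, the key point being that the geometric lower bound is ultimately forced by the algebraic one. First I would settle the purely algebraic equality $d_{\mathbb Z}(A_{n+1})=n$. For $d_{\mathbb Z}\le n$, the root lattice $L=\{x\in\mathbb Z^{n+1}:\sum_i x_i=0\}\cong\mathbb Z^n$ is preserved by the coordinate-permutation action, giving $A_{n+1}\hookrightarrow\mathrm{GL}_n(\mathbb Z)$; this standard representation is faithful because $A_{n+1}$ is simple (as $n\ge4$) and the representation is nontrivial. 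For $d_{\mathbb Z}\ge n$ it suffices to prove $d_{\mathbb Q}(A_{n+1})=n$. By simplicity every nontrivial representation is faithful, so I only need the minimal dimension of a nontrivial rational representation. For $n\ge5$ the smallest nontrivial complex irreducible has dimension $n$ and is already the rational standard representation, so $d_{\mathbb Q}=n$. The sole delicate case is $A_5$ ($n=4$): its two $3$-dimensional complex irreducibles have character values in $\mathbb Q(\sqrt{5})$ and are not realizable over $\mathbb Q$, so the smallest faithful rational representation is again $4$-dimensional. This rationality phenomenon is exactly why the integral (rather than the real) representation dimension is the correct invariant here.

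The upper bound $d_h(A_{n+1},\mathcal{FM})\le n$ is then immediate: the integral standard representation realizes $A_{n+1}$ inside $\mathrm{GL}_n(\mathbb Z)=\mathrm{Aut}(L)$, and every lattice automorphism induces a diffeomorphism of the flat torus $T^n=(L\otimes\mathbb R)/L$. The resulting action of $A_{n+1}$ on $T^n$ is effective, since distinct lattice automorphisms act distinctly on $H_1(T^n;\mathbb Z)=L$.

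The heart of the theorem is the lower bound, for which I would prove the stronger statement $d_s(A_{n+1},\mathcal{FM})\ge n$ (allowing homeomorphisms). Suppose $G=A_{n+1}$ acts effectively on a closed flat $M^k$ and let $\Gamma=\pi_1(M)$, a Bieberbach group in $1\to L\to\Gamma\to H\to1$ with $L\cong\mathbb Z^k$ the characteristic maximal translation lattice and $H$ the finite holonomy. Then $M$ is finitely covered by $T^k=(L\otimes\mathbb R)/L$ with deck group $H$, and since $L$ is characteristic every $g\in G$ lifts to $T^k$; the set of all lifts forms a finite group $\hat G$ in $1\to H\to\hat G\to G\to1$ acting effectively on $T^k$. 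The induced action on $\pi_1(T^k)=L$ gives $\rho:\hat G\to\mathrm{GL}_k(\mathbb Z)$ whose kernel $K$ acts trivially on homology and is therefore abelian, by the rigidity of homotopically trivial finite torus actions (Conner--Raymond). As $K$ and $H$ are both normal in $\hat G$, the image $KH/H$ is an abelian normal subgroup of the simple group $\hat G/H=A_{n+1}$, hence trivial, so $K\le H$. Consequently $\rho(\hat G)=\hat G/K$ is a finite subgroup of $\mathrm{GL}_k(\mathbb Z)$ that surjects onto $A_{n+1}$; I have reduced the problem to showing that a finite subgroup of $\mathrm{GL}_k(\mathbb Q)$ admitting $A_{n+1}$ as a quotient must have $k\ge n$.

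This last reduction is the step I expect to be the main obstacle, and it is where rationality is indispensable: over $\mathbb C$ the claim is false, since $\mathrm{SL}(2,5)\twoheadrightarrow A_5$ has a faithful $2$-dimensional complex representation. Let $V=\mathbb Q^k$ be the faithful module of $\Lambda:=\rho(\hat G)$ and $B=\ker(\Lambda\to A_{n+1})$. If $A_{n+1}$ acts nontrivially on the submodule $V^B$, then $V^B$ is a faithful (hence, by simplicity, of dimension $\ge n$) rational $A_{n+1}$-module and $k\ge n$. Otherwise $V^B$ is $\Lambda$-trivial, and its nontrivial complement $W$ is a faithful $\Lambda$-module with $W^B=0$; decomposing $W|_B$ by Clifford theory, if some $B$-constituent is not $A_{n+1}$-invariant its $A_{n+1}$-orbit has length at least the minimal index $n+1$ of a proper subgroup of $A_{n+1}$, giving $k\ge n+1$. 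The remaining case, where $B$ is (after passing to a minimal perfect cover) central and all constituents are invariant --- exactly the $\mathrm{SL}(2,5)$ situation --- is where I would invoke the arithmetic of Schur indices and non-rational character values, the same obstruction that forces $d_{\mathbb Q}(A_5)=4$, to conclude $k\ge n$. Combining the three cases yields $d_s(A_{n+1},\mathcal{FM})\ge n$; together with the upper bound and $d_{\mathbb Z}(A_{n+1})=n$ this gives $d_{\mathbb Z}(A_{n+1})=d_h(A_{n+1},\mathcal{FM})=n$, as claimed. (Alternatively, this final representation-theoretic step can be packaged through the paper's general characterization of finite groups acting on closed flat manifolds.)
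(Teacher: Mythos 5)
Your reduction of the lower bound is correct and, in substance, re-derives by hand what the paper obtains from its characterization results: you lift the action to the characteristic torus cover, kill the homologically trivial kernel via Conner--Raymond/Weinberger rigidity (the paper's Lemma \ref{wein}), and use simplicity of $A_{n+1}$ to conclude that some finite subgroup of $\mathrm{GL}_k(\mathbb{Z})$ surjects onto $A_{n+1}$ --- exactly the conclusion of the paper's Theorem \ref{simple}, which it deduces from Theorem \ref{th3}. Both arguments therefore meet at the same purely algebraic statement: a finite subgroup of $\mathrm{GL}_k(\mathbb{Q})$ admitting $A_{n+1}$ as a quotient forces $k\geq n$. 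From there the routes genuinely diverge. The paper splits into $n\geq 25$, handled by Collins' theorem (Lemma \ref{col}: an abelian normal subgroup of index at most $(k+1)!$, which must die in the simple quotient, giving $(n+1)!/2\leq (k+1)!$), and $4\leq n<25$, handled by GAP computations over the Plesken--Nebe classification of maximal finite subgroups of $\mathrm{GL}_k(\mathbb{Q})$, ultimately reducing to the Weyl groups of $E_7$ and $E_8$. You propose instead a uniform Clifford-theoretic argument; if completed, it would be computer-free, which would be a genuine improvement.

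However, your Case 3 has a real gap, because that case is not ``exactly the $\mathrm{SL}(2,5)$ situation.'' First, passing to a minimal subgroup surjecting onto $A_{n+1}$ only makes the kernel $B$ nilpotent and contained in the Frattini subgroup; it does not make $B$ central or cyclic, so a further Clifford analysis (the factorization $U\cong S\otimes T$ with $S$ a faithful irreducible $B$-module, plus the dichotomy $\dim S=1$ versus $\dim S\geq 2$) is needed before you may reduce to central extensions. Second, and more seriously: once every $B$-isotypic component is invariant, what you must bound below is the degree $e$ of a nontrivial \emph{projective} representation of $A_{n+1}$, and this is needed for \emph{every} $n$, not just small $n$. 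For $n\geq 7$ the required input is Schur's theorem that faithful (spin) representations of the double cover $2\cdot A_{n+1}$ have degree at least $2^{\lfloor (n-1)/2\rfloor}$ --- a fact about complex representations with no rationality content whatsoever. The tools you name, Schur indices and irrational character values, are relevant only to the finitely many exceptional covers $2\cdot A_5$, $2\cdot A_6$, $3\cdot A_6$, $2\cdot A_7$ (i.e.\ $n\leq 6$); for, say, $n=10$ they say nothing against a hypothetical $9$-dimensional faithful complex representation of a group with quotient $A_{11}$, and only the spin-dimension bound (or a Jordan/Collins-type order bound, as in the paper) excludes it. With Schur's projective-degree bounds added as an explicit ingredient your scheme does close up, but as written this key input is missing. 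Finally, your parenthetical fallback is circular: the paper's characterization theorem supplies precisely the reduction you have already carried out, not the representation-theoretic dimension bound that is at issue.
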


Since \ $A_{5}$ is a subgroup of $\mathrm{SL}_{3}(\mathbb{R})$ which acts
effectively on the Euclidean space $\mathbb{R}^{3}$ linearly, it is
impossible to extend Theorem \ref{th1} to either $d_{\mathbb{R}}$ or $d_{h}$
on \emph{all} (compact and non-compact) flat manifolds. The bound on $n$
cannot be improved, since $d_{\mathbb{Z}}(A_{4})=3$ and $d_{h}(A_{4},%
\mathcal{FM})=2$ (cf. Corollary \ref{a4}).

\bigskip

In order to prove Theorem \ref{th1}, we need the following result of M.
Collins \cite{co} (Theorem B).

\begin{lemma}
\label{col}Let $G$ be a finite subgroup of the general linear group $\mathrm{%
GL}_{n}(\mathbb{R}).$ If $n\geq 25,$ then $G$ has an abelian normal subgroup 
$A$ of index at most $(n+1)!$.
\end{lemma}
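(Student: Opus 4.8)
The plan is to obtain this as a consequence of the sharp form of Jordan's theorem for complex linear groups: a finite subgroup of $\mathrm{GL}_{n}(\mathbb{R})$ is in particular a finite subgroup of $\mathrm{GL}_{n}(\mathbb{C})$, so it suffices to prove the statement there. The configuration to keep in mind throughout is the symmetric group $S_{n+1}$ acting through its standard $n$-dimensional representation: its only abelian normal subgroup is trivial, so it realises the index $(n+1)!$ exactly and shows the bound is best possible. The real content is therefore the upper estimate, namely that for $n\geq 25$ no finite $G<\mathrm{GL}_{n}(\mathbb{C})$ satisfies $|G|/a(G)>(n+1)!$, where $a(G)$ denotes the largest order of an abelian normal subgroup of $G$.

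First I would reduce to the case where $G$ acts primitively, arguing by induction on $n$. If $V=\mathbb{C}^{n}$ is a reducible $G$-module then, by Maschke's theorem, $V$ splits as a direct sum of proper submodules and $G$ embeds faithfully into the product of the corresponding smaller general linear groups, so the inductive hypothesis and submultiplicativity of the Jordan quotient give the bound. If $V$ is irreducible but imprimitive then $G$ preserves a transitively permuted block decomposition and lies in a wreath product $\mathrm{GL}(W)\wr S_{k}$ with $n=k\dim W$; here the monomial case ($\dim W=1$) yields an abelian normal subgroup of index dividing $n!<(n+1)!$, and for $\dim W\geq 2$ the factor $k!$ is small enough that induction on the base closes the estimate. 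This isolates the primitive case, in which the extremal $S_{n+1}$ indeed lives.

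For a primitive $G$ I would analyse the generalized Fitting subgroup $F^{\ast}(G)=F(G)E(G)$. Primitivity forces every characteristic abelian subgroup of $F(G)$ to be cyclic and scalar, so $F(G)$ contributes only a bounded, essentially central piece, and the decisive information sits in the layer $E(G)$, a central product of quasi-simple components. Each component acts faithfully and projectively on a tensor factor of $V$, whence $n\geq\prod_{i}R(L_{i})$, where $R(L)$ denotes the minimal dimension of a faithful projective representation of the component $L$. Invoking the classification of finite simple groups together with the Landazuri--Seitz--Zalesskii lower bounds on $R(L)$ for quasi-simple groups of Lie type, one finds that $R(L)$ grows so quickly for every family except the alternating groups, for which $R(A_{m})=m-1$ when $m$ is large, that any other component forces $n$ to be too large to let $|G|/a(G)$ approach $(n+1)!$. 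The surviving configuration is a single alternating component $A_{n+1}$ carrying its standard module, so $G$ is almost simple with socle $A_{n+1}$, hence $G<\mathrm{Aut}(A_{n+1})=S_{n+1}$ and $|G|/a(G)\leq(n+1)!$.

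The main obstacle is exactly this quantitative comparison: one must run it uniformly across all families of finite simple groups --- the classical groups, the exceptional groups of Lie type, and the $26$ sporadic groups --- bounding $|L|\cdot|\mathrm{Out}(L)|$ against the growth of $R(L)$ in each case, and then dispose of the finitely many small-rank and sporadic configurations that could in principle beat $(n+1)!$ in low dimension. Over $\mathbb{C}$ a few such exceptions persist below dimension $71$, but passing to $\mathbb{R}$ removes the complex-only representations responsible for them, which is what lowers the clean threshold to the value $n\geq 25$ quoted here; a finite residual check of the remaining small dimensions then completes the argument. Since reproducing this entire analysis rests on the full classification and is a substantial undertaking in its own right, in the present paper one simply invokes Collins' Theorem~B \cite{co}.
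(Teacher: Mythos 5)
You end up exactly where the paper does: the paper offers no proof of Lemma \ref{col} at all, it simply quotes it as Theorem B of Collins \cite{co}, and your final sentence makes the same appeal. Your sketch of how Collins' argument runs (reduction to the primitive case, the generalized Fitting subgroup and CFSG-based bounds, with the correct caveat that embedding into $\mathrm{GL}_{n}(\mathbb{C})$ alone would only yield the weaker complex threshold $n\geq 71$, the real structure being what lowers it to $25$) is a fair outline of the cited work, so the two treatments coincide in substance.
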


\begin{proof}[Proof of Theorem \protect\ref{th1}]
By Lemma \ref{le1}, we have $d_{h}(G,\mathcal{FM})\leq d_{\mathbb{Z}}(G).$
Since $A_{n+1}$ is a subgroup of $\mathrm{GL}_{n}(\mathbb{Z}),$ we get that $%
d_{\mathbb{Z}}(G)\leq n.$ It suffices to prove that $n\leq d_{h}(G,\mathcal{%
FM}).$ Let $G$ act effectively on a closed flat manifold $M^{k}$ with $%
k=d_{h}(G,\mathcal{FM}).$ When $n\geq 4,$ the group $G=A_{n+1}$ is simple.
Lemma \ref{simple} implies that $G=K/H$ for some finite subgroup $K<\mathrm{%
GL}_{k}(\mathbb{Z})$ and a normal subgroup $H\trianglelefteq K.$ Since $G$
is noncommutative simple, any abelian normal subgroup of $K$ is contained in 
$H.$ We prove the theorem in two cases.

\begin{enumerate}
\item[Case (i)] $n\geq 25.$ 

By Lemma \ref{col}, the size of $G$ is at most $(k+1)!,$ which implies that $%
n=k.$

\item[Case (ii)] $4\leq n<25.$ 

Let $K^{\prime }$ be a maximal finite subgroup of $\mathrm{GL}_{n}(\mathbb{Q}%
)$ containing $K.$ We have that the cardinality $|K^{\prime }|$ is divisible
by $|G|=(n+1)!/2.$ Note that each maximal finite subgroup $K^{\prime }<%
\mathrm{GL}_{n}(\mathbb{Q})$ is isomorphic to a product $G_{1}\times
G_{2}\times \cdots \times G_{s},$ where $G_{i}$ is irreducible maximal
finite subgroups of $\mathrm{GL}_{n_{i}}(\mathbb{Q})$ for $i=1,...,s$ and $%
n_{1}+n_{2}+\cdots +n_{s}=n$ (cf. \cite{Pl}, (11.4) Remark (i), page 479).
When $k<31,$ all the irreducible maximal finite subgroup of $\mathrm{GL}_{k}(%
\mathbb{Q})$ are classified in \cite{PN95, np95, ne95,ne}. Suppose that $%
k\leq n-1.$ We check these maximal subgroups to get a contradiction.
Practically, we use the software GAP \cite{gap} to do this. First, input the
following code:
\end{enumerate}

\texttt{gap\TEXTsymbol{>} d:=[];;s:=[];;k:=1;;}

\texttt{gap\TEXTsymbol{>} for n in [3..24] do}

\texttt{\TEXTsymbol{>} A:=Partitions(n);}

\texttt{\TEXTsymbol{>} for j in [1..NrPartitions(n)] do}

\texttt{\TEXTsymbol{>} B:=A[j];}

\texttt{\TEXTsymbol{>} orders:=List([1..Size(B)], x-\TEXTsymbol{>}%
List([1..ImfNumberQQClasses(B[x])],}

\texttt{\ y-\TEXTsymbol{>}ImfInvariants(B[x], y).size ));}

\texttt{\TEXTsymbol{>} C:=Cartesian(orders);}

\texttt{\TEXTsymbol{>} for i in [1..Size(C)] do}

\texttt{\TEXTsymbol{>} prod:=Product(C[i]) mod Size(AlternatingGroup(n+2));}

\texttt{\TEXTsymbol{>} if prod=0 then}

\texttt{\TEXTsymbol{>} d[k]:=B;s[k]=C[i];k:=k+1;}

\texttt{\TEXTsymbol{>} fi;}

\texttt{\TEXTsymbol{>} od;}

\texttt{\TEXTsymbol{>} od;}

\texttt{\TEXTsymbol{>} od;}

\texttt{gap\TEXTsymbol{>} d;s;}

\texttt{[ [ 7 ], [ 8 ] ]}

\texttt{[ [ 2903040 ], [ 696729600 ] ]}\textrm{.}

\texttt{gap\TEXTsymbol{>} List([1..ImfNumberQQClasses(7)],x-\TEXTsymbol{>}%
ImfInvariants(7, x).size) mod 2903040;}

\texttt{\ List([1..ImfNumberQQClasses(8)],x-\TEXTsymbol{>}ImfInvariants(8,
x).size) mod 696729600;}

\texttt{[ 645120, 0 ]}

\texttt{[ 10321920, 2654208, 0, 6912, 497664, 115200, 28800, 1440, 672 ].}

The program finds the maximal groups in $\mathrm{GL}_{n}(\mathbb{Q})$ whose
orders are divisible by $|G|=(n+2)!/2$ for each \ $n\leq 24.$ The output
shows that there are only two maximal finite groups $K_{1}^{\prime }=$%
\textrm{ImfMatrixGroup}$(7,2,1)$ and $K_{2}^{\prime }=$\textrm{ImfMatrixGroup%
}$(8,3,1)$ whose orders could be divisible by those of $A_{9}$ and $A_{10}$,
respectively. Here \textrm{ImfMatrixGroup}$(n,i,1)$ represents the $\ i$-th
irreducible maximal finite group in $\mathrm{GL}_{n}(\mathbb{Q}).$ However, $%
K_{1}^{\prime }$ is isomorphic to the Weyl group of $E_{7},$ and $%
K_{2}^{\prime }\ $is isomorphic to the Weyl group of $E_{8}$ (this could be
seen from the commands \texttt{DisplayImfInvariants(7, 2, 1)} and \texttt{%
DisplayImfInvariants(8, 3, 1)} in GAP). Input the following code:

\texttt{gap\TEXTsymbol{>} s:=[];;j:=1;;}

gap\TEXTsymbol{>} \texttt{%
cc:=ConjugacyClassesSubgroups(ImfMatrixGroup(7,2,1));;Size(cc);}

\texttt{gap\TEXTsymbol{>}for i in [1..Size(cc)] do}

\texttt{\TEXTsymbol{>} a:=Size(Representative(cc[i])) mod
Size(AlternatingGroup(9));}

\texttt{\TEXTsymbol{>} if a=0 then}

\texttt{\TEXTsymbol{>} s[j]:=i;j:=j+1;}

\texttt{\TEXTsymbol{>} fi;}

\texttt{\TEXTsymbol{>} od;}

\texttt{gap\TEXTsymbol{>} s;}

\texttt{[ 8073, 8074 ].}

\texttt{gap\TEXTsymbol{>} GQuotients(Representative(cc[8073]),
AlternatingGroup(9));}

\TEXTsymbol{>} \texttt{GQuotients(Representative(cc[8074]),
AlternatingGroup(9));}

\texttt{[ ] }

\texttt{[ ].}

The program finds the subgroups of $K_{1}^{\prime }$ which have nontrivial
surjections to $A_{9}.$ The output shows that among the 8074 conjugacy
classes of subgroups in $K_{1}^{\prime },$ there are only two classes whose
orders could be divisible by that of $A_{9}$. Furthermore, neither of these
two classes of groups could have a quotient group $A_{9}.$

A similar argument proves that $K_{2}^{\prime }$ does not contain a subgroup
whose quotient is isomorphic to $A_{10}.$ Note that we can not apply
directly the same code used for dealing with \textrm{ImfMatrixGroup}$(7,2,1)$%
, since the group \textrm{ImfMatrixGroup}$(8,3,1)$ is too large to compute
(in an ordinary laptop). We proceed as follows. First note that the center $%
Z $ of the Weyl group $K_{2}^{\prime }$ of $E_{8}$ is of order two and the
quotient group $K_{2}^{\prime }/Z$ is isomorphic to the orthogonal group $%
O_{8}^{+}(2),$ the linear transformations of an 8-dimensional vector space
over the two-element field $\mathbb{Z}/2$ preserving a quadratic form of
plus type (cf. \cite{ct}, p.85). Since the alternating group $A_{10}$ is
simple, the subgroup $K<K_{2}^{\prime }$ with $K/H\cong A_{10}$ could chosen
to be a subgroup in $O_{8}^{+}(2).$ The group $O_{8}^{+}(2)$ has no quotient
group isomorphic to $A_{10},$ by checking the following code in GAP:

\texttt{gap\TEXTsymbol{>} o:=GO(1,8,2);;}

\texttt{gap\TEXTsymbol{>} GQuotients(o,AlternatingGroup(10));}

\texttt{[ ].}

Therefore, the subgroup $K$ lies in the (indexed 2) unique proper maximal
normal subgroup $N$ of $O_{8}^{+}(2).$ By a similar way, we see that $N$ has
no quotient groups isomorphic to $A_{10}.$ This implies that $K$ lies in a
maximal subgroup of $N.$ Input the following code in GAP:

\texttt{gap\TEXTsymbol{>} o:=GO(1,8,2);;}

\texttt{gap\TEXTsymbol{>} n:=MaximalNormalSubgroups(o);;}

\texttt{gap\TEXTsymbol{>}
cc:=ConjugacyClassesMaximalSubgroups(n[1]);;Size(cc);}

\texttt{gap\TEXTsymbol{>} s:=[];;j:=1;;}

\texttt{gap\TEXTsymbol{>} for i in [1..Size(cc)] do}

\texttt{\TEXTsymbol{>} a:=Size(Representative(cc[i])) mod
Size(AlternatingGroup(10));}

\texttt{\TEXTsymbol{>} if a=0 then}

\texttt{\TEXTsymbol{>} s[j]:=i;j:=j+1;}

\texttt{\TEXTsymbol{>} fi;}

\texttt{\TEXTsymbol{>} od;}

\texttt{gap\TEXTsymbol{>} s;}

\texttt{[ ].}

The program finds all the subgroups of $N$ whose orders are divisible by the
order $|A_{10}|.$ The output shows that there are no such subgroups. The
whole proof is finished.
\end{proof}

\section{Groups normally generated by alternating groups\label{sec}}

In this section, we consider typical (infinite) groups normally generated by
alternating groups.

\subsection{Automorphism groups of free groups}

Let $F_{n}=\langle a_{1},\cdots ,a_{n}\rangle $ be a free group of $n$
letters. Denote by $\mathrm{Aut}(F_{n})$ the automorphism group of $F_{n}$
and by $\mathrm{Out}(F_{n})=\mathrm{Aut}(F_{n})/\mathrm{Inn}(F_{n})$ the
outer automorphism group, where $\mathrm{Inn}(F_{n})$ is the inner
automorphism subgroup. For $\ 1\leq i\neq j\leq n,$ define $\sigma _{ij}\in 
\mathrm{Aut}(F_{n})$ as 
\begin{equation*}
\sigma _{ij}(a_{i})=a_{j},\sigma _{ij}(a_{j})=a_{i}\text{ and }\sigma
_{ij}(a_{k})=a_{k}\text{, }k\neq i,j.
\end{equation*}%
Furthermore, define $\sigma _{i,n+1}\in \mathrm{Aut}(F_{n})$ as 
\begin{equation*}
\sigma _{i,n+1}(a_{i})=a_{i}^{-1}\text{ and }\sigma
_{i,n+1}(a_{j})=a_{j}a_{i}^{-1},j\neq i.
\end{equation*}%
The subgroup of $\mathrm{Aut}(F_{n})$ generated by the elements $\sigma
_{ij} $ and $\sigma _{i,n+1}$ is isomorphic to the symmetric group $S_{n+1}$
of $n+1$ letters, in such a way that $\sigma _{ij}$ corresponds to the
permutation $(ij)$ (cf. \cite{bv2}, Section 6). The action of $\mathrm{Aut}%
(F_{n})$ on the abelianization of $F_{n}$ induces a homomorphism from $%
\mathrm{Aut}(F_{n})$ to $\mathrm{GL}_{n}(\mathbb{Z})$ that factors through
the outer automorphism group $\mathrm{Out}(F_{n})$. The inverse images of
the special linear group $\mathrm{SL}_{n}(\mathbb{Z})$ under these maps are
normal subgroups denoted here by the special automorphism group $\mathrm{SAut%
}(F_{n})$ and $\mathrm{SOut}(F_{n})$ respectively.

\begin{lemma}
\label{autf}When \ $n\geq 3,$ the groups $\mathrm{SAut}(F_{n})$ and $\mathrm{%
SOut}(F_{n})$ are normally generated by the alternating group $A_{n+1}.$
\end{lemma}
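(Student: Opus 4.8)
The plan is to show that the normal closure of $A_{n+1}$ inside $\mathrm{SAut}(F_{n})$ is all of $\mathrm{SAut}(F_{n})$, and then to transfer this to $\mathrm{SOut}(F_{n})$ via the natural surjection. The key input is that $\mathrm{SAut}(F_{n})$ is itself generated by a well-understood finite set of elementary automorphisms, the \emph{Nielsen transvections} $E_{ij}$ (defined by $a_i \mapsto a_i a_j$ and fixing the other generators). So the whole problem reduces to expressing a single representative transvection as a product of conjugates of elements of $A_{n+1}$; since the transvections form a single conjugacy class under the action of $S_{n+1}$ (the permutation subgroup described in the excerpt acts transitively on them up to obvious symmetries), recovering one transvection inside the normal closure will recover all of them.

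First I would recall the explicit injection $S_{n+1} \hookrightarrow \mathrm{Aut}(F_n)$ from the excerpt, under which $A_{n+1}$ sits as an index-two subgroup of $S_{n+1}$; since $A_{n+1}$ maps into $\mathrm{SL}_n(\mathbb{Z})$ under abelianization (even permutations give determinant $+1$), we have $A_{n+1} < \mathrm{SAut}(F_n)$, so the normal closure makes sense. Next I would produce an explicit commutator or short word showing that some Nielsen transvection lies in the normal subgroup generated by $A_{n+1}$. The natural tactic is to compute $[\sigma, \sigma']$ for suitable even permutations $\sigma, \sigma'$ among the $\sigma_{ij}$ and $\sigma_{i,n+1}$, or to conjugate a known element of $A_{n+1}$ by another and simplify; the point is that the $3$-cycle generators of $A_{n+1}$, when composed, yield automorphisms whose action on the free group differs from a permutation by exactly a transvection. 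Because $n \geq 3$ guarantees $A_{n+1}$ has enough $3$-cycles to realize the needed relations (and in particular $A_4, A_5, \dots$ are perfect and nonabelian), this computation should close up.

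The step I expect to be the main obstacle is the explicit bookkeeping that identifies a product of the permutation automorphisms $\sigma_{i,n+1}$ (which act nonlinearly, sending $a_j \mapsto a_j a_i^{-1}$) with an honest transvection modulo inner automorphisms. The automorphisms $\sigma_{i,n+1}$ are precisely the ones that introduce the nontrivial word-length changes, so the heart of the argument is a careful word computation in $\mathrm{Aut}(F_n)$ showing that the normal closure of $A_{n+1}$ captures at least one $E_{ij}$. Once that single transvection is obtained, transitivity of the $S_{n+1}$-conjugation action over the transvections, together with the standard Nielsen generation of $\mathrm{SAut}(F_n)$ by transvections, finishes the $\mathrm{SAut}$ case immediately. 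Finally, for $\mathrm{SOut}(F_n)$ I would simply push the whole statement forward along the quotient map $\mathrm{SAut}(F_n) \twoheadrightarrow \mathrm{SOut}(F_n)$: a normal generating set maps to a normal generating set under any surjection, and the image of $A_{n+1}$ in $\mathrm{SOut}(F_n)$ is still isomorphic to $A_{n+1}$ because $\mathrm{Inn}(F_n)$ is torsion-free, so no collapse of the finite subgroup occurs.
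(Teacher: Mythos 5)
Your high-level skeleton (embed $A_{n+1}$ into $\mathrm{SAut}(F_n)$, show its normal closure $N$ contains one transvection, propagate to all transvections, then push everything forward along $\mathrm{SAut}(F_n)\twoheadrightarrow\mathrm{SOut}(F_n)$) is reasonable, and your final step is correct as stated. But there is a genuine gap at exactly the step you defer, and the tactic you propose for it cannot succeed even in principle. The elements $\sigma_{ij}$ and $\sigma_{i,n+1}$ generate a \emph{finite} subgroup of $\mathrm{Aut}(F_n)$ isomorphic to $S_{n+1}$, and this subgroup is closed under products, inverses, commutators and mutual conjugation. Consequently any commutator $[\sigma,\sigma']$ of even permutations, and any conjugate of an element of $A_{n+1}$ by another element of $A_{n+1}$, lies back inside the embedded copy of $A_{n+1}$. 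A Nielsen transvection has infinite order, so no such expression can ever equal one; likewise no composite of $3$-cycles can ``differ from a permutation by exactly a transvection,'' because the composite \emph{is} an element of the finite group. So what you call bookkeeping is not bookkeeping: as described it would fail, and it is precisely the mathematical content of the lemma. (For comparison, the paper does not perform this computation either: its proof is a one-line citation of Berrick--Matthey, Lemma 2.2, which states that $\mathrm{SAut}(F_n)$ is normally generated by the even smaller subgroup $A_n$; the $\mathrm{SOut}$ case then follows since a normal generating set survives a surjection, as you note.)

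The step can be repaired, but the computation must exploit normality of $N$ in the whole group, i.e.\ conjugation of $A_{n+1}$ by automorphisms \emph{outside} $S_{n+1}$. Write $\rho_{ij}\colon a_i\mapsto a_ia_j$ and $\lambda_{ij}\colon a_i\mapsto a_ja_i$ for the right and left transvections, and let $\tau=(123)\in A_{n+1}$, an honest permutation of the generators, available for all $n\ge 3$. Then
\begin{equation*}
[\rho_{12},\tau]=\rho_{12}\,(\tau\rho_{12}\tau^{-1})^{-1}=\rho_{12}\rho_{23}^{-1}\in N,
\end{equation*}
and since $\lambda_{23}$ commutes with $\rho_{23}$ while a direct check (with the convention $(fg)(x)=f(g(x))$) gives $[\rho_{12},\lambda_{23}]=\rho_{13}^{-1}$, one obtains
\begin{equation*}
\bigl[[\rho_{12},\tau],\lambda_{23}\bigr]=[\rho_{12},\lambda_{23}]=\rho_{13}^{-1}\in N.
\end{equation*}
From this single transvection, conjugation by even permutations of the generators and further commutators of the type $[\rho_{ij},\lambda_{jk}]=\rho_{ik}^{-1}$ (all of which stay in $N$ by normality) produce every $\rho_{ij}$ and $\lambda_{ij}$, and these generate $\mathrm{SAut}(F_n)$ by Gersten's theorem. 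Two further cautions about your propagation step: conjugation must take place inside $\mathrm{SAut}(F_n)$, so odd permutations (including every $\sigma_{i,n+1}$ and every transposition $\sigma_{ij}$, all of determinant $-1$ on abelianization) are unavailable; and the formula $\sigma E_{ij}\sigma^{-1}=E_{\sigma(i)\sigma(j)}$ holds only for honest permutations of the generators, not for the nonlinear elements $\sigma_{i,n+1}$, so ``transitivity of $S_{n+1}$-conjugation on transvections'' is not literally true and must be replaced by the commutator manipulations above.
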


\begin{proof}
It is actually proved by Berrick and Matthey \cite{Ber} (Lemma 2.2) that the
group $\mathrm{SAut}(F_{n})$ is normally generated by \ $A_{n},$ which
clearly implies the statement.
\end{proof}

\subsection{General linear groups}

Let $R$ be an associative ring (may be not abelian) with identity. The
general linear group $\mathrm{GL}_{n}(R)$ is the group of all $n\times n$
invertible matrices with entries in $R$. For an element $r\in R$ and any
integers $i,j$ such that $1\leq i\neq j\leq n,$ denote by $e_{ij}(r)$ the
elementary $n\times n$ matrix with $1s$ in the diagonal positions and $r$ in
the $(i,j)$-th position and zeros elsewhere. The group $E_{n}(R)$ is
generated by all such $e_{ij}(r),$\textsl{\ i.e. }%
\begin{equation*}
E_{n}(R)=\langle e_{ij}(r)|1\leq i\neq j\leq n,r\in R\rangle .
\end{equation*}

\begin{lemma}
\label{lin}When $n\geq 3,$ the group \ $E_{n}(R)$ is normally generated by
the alternating group $A_{n+1}.$
\end{lemma}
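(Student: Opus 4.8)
The plan is to show that $E_n(R)$ is normally generated by a copy of $A_{n+1}$ sitting inside it. The natural candidate for this copy of $A_{n+1}$ is the one coming from the permutation matrices: the symmetric group $S_{n+1}$ embeds into $E_n(R)$ (or at least into $\mathrm{GL}_n(R)$) via signed permutation matrices that permute the $n$ coordinates together with the "extra" relation tying the $(n+1)$-st coordinate to the others, mirroring the embedding $S_{n+1}\hookrightarrow \mathrm{Aut}(F_n)$ described just above for the free-group case. Concretely, I would realize the transpositions $(i\,j)$ (for $i,j\le n$) as the automorphisms swapping basis vectors $e_i\leftrightarrow e_j$, and the transposition $(i\,n+1)$ as the transformation sending $e_i\mapsto -e_i$ and $e_j\mapsto e_j-e_i$; the even permutations then generate $A_{n+1}<E_n(R)$. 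Each of these generators is visibly a product of elementary matrices $e_{ij}(\pm 1)$, so $A_{n+1}$ does land inside $E_n(R)$.

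The heart of the argument is then purely group-theoretic: let $N\trianglelefteq E_n(R)$ be the normal closure of this $A_{n+1}$, and show $N=E_n(R)$. Since $E_n(R)$ is generated by the elementary matrices $e_{ij}(r)$, it suffices to prove that every $e_{ij}(r)$ lies in $N$. The key step is to produce, for a \emph{fixed} generator of $A_{n+1}$, a conjugate in $E_n(R)$ that is an elementary matrix, and then use the commutator identity $[e_{ik}(r),e_{kj}(1)]=e_{ij}(r)$ (valid whenever $i,j,k$ are distinct, which is where $n\ge 3$ is used) to bootstrap: once one elementary matrix $e_{ij}(r)$ is known to lie in $N$, all others in its "row/column orbit" follow by commutators with permutation matrices (which are in $N$) and by this Steinberg relation. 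The $3$-cycle $(i\,j\,k)$ with $i,j,k\le n$, being an even permutation, lies in $A_{n+1}$, and conjugating an elementary matrix by such cycles lets me move the indices around freely.

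The step I expect to be the main obstacle is getting the \emph{first} elementary matrix $e_{ij}(r)$ with arbitrary $r\in R$ into $N$ — the alternating group only supplies elements with entries $\pm 1$, so I cannot produce a general ring element $r$ by conjugation alone. The device that overcomes this is to write $e_{ij}(r)$ as a commutator involving a permutation-type element and an elementary matrix with a $\pm1$ entry, exploiting that conjugation of $e_{ij}(1)$ by a suitable element already in $N$ can shift indices but that the Steinberg commutator $[e_{ik}(r),e_{kj}(1)]=e_{ij}(r)$ introduces the arbitrary parameter $r$ \emph{from one factor already forced to lie in $N$}. Here the cleanest route is: first show the \emph{specific} element $e_{ij}(1)\in N$ (as the appropriate conjugate/product of permutation matrices and the sign-change generator $e_i\mapsto -e_i$, $e_j\mapsto e_j-e_i$), then observe that for any $r$, $e_{ij}(r)=[e_{ik}(r),e_{kj}(1)]$ where $e_{kj}(1)\in N$ and $e_{ik}(r)\in E_n(R)$, so by normality the commutator lies in $N$. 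Iterating over all index pairs via the $3$-cycles in $A_{n+1}$ then yields every elementary generator, giving $N=E_n(R)$.

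Since the analogous statements for $EU_n(R,\Lambda)$ in the unitary case and for $\mathrm{SAut}(F_n)$, $\mathrm{SOut}(F_n)$ (the latter already handled in Lemma \ref{autf}) run along identical lines, I would remark that the unitary version follows by replacing the Steinberg relations with the corresponding relations among the elementary unitary generators, the role of $A_{n+1}$ being played by the same signed-permutation alternating subgroup, with $n\ge 3$ again ensuring enough distinct indices for the commutator calculus to close up.
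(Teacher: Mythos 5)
Your second half---the Steinberg bootstrapping $e_{ij}(r)=[e_{ik}(r),e_{kj}(1)]$ once some integral elementary matrix is known to lie in the normal closure $N$---is exactly the reduction the paper itself uses (there phrased as: $E_n(R)$ is normally generated by $i(\mathrm{SL}_n(\mathbb{Z}))$ via the commutator formula). The difference is in how the two arguments get started: the paper does no matrix computation at all, but quotes Berrick--Matthey (Lemma \ref{autf}) to see that $\mathrm{SAut}(F_n)$ is normally generated by an alternating group, and pushes this through the surjection $\mathrm{SAut}(F_n)\twoheadrightarrow\mathrm{SL}_n(\mathbb{Z})$, under which $A_{n+1}$ embeds. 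You instead propose a direct, self-contained computation, and this is where your proposal has a genuine gap. You claim the seed element $e_{ij}(1)\in N$ arises ``as the appropriate conjugate/product of permutation matrices and the sign-change generator.'' Two problems. First, the sign-change matrix is the image of the transposition $(i\;n{+}1)$, an \emph{odd} permutation, so it does not lie in $A_{n+1}$ and is not at your disposal. Second, and more fundamentally, the image of $S_{n+1}$ (a fortiori of $A_{n+1}$) in $\mathrm{GL}_n(\mathbb{Z})$ is a \emph{finite} group, so no product of these matrices, nor of their conjugates by one another, can equal the infinite-order element $e_{ij}(1)$. To escape the finite group you must conjugate by elements of $E_n(R)$ outside it, and you never exhibit such a computation; but this is precisely the nontrivial content of the lemma, so it cannot simply be asserted.

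The gap is fixable inside your framework. For a $3$-cycle $(k\,l\,m)$ with $k,l,m\le n$ distinct (this is where $n\ge 3$ enters), its permutation matrix $P$ lies in your copy of $A_{n+1}$, and for every $r\in R$,
\begin{equation*}
[e_{kl}(r),P]\;=\;e_{kl}(r)\,\bigl(P e_{kl}(-r)P^{-1}\bigr)\;=\;e_{kl}(r)\,e_{lm}(-r)\;\in\;N,
\end{equation*}
since it is a conjugate of $P$ times $P^{-1}$. Multiplying the instances $r=1$ and $r=-1$ of this element gives
\begin{equation*}
\bigl(e_{kl}(1)e_{lm}(-1)\bigr)\bigl(e_{kl}(-1)e_{lm}(1)\bigr)\;=\;[e_{kl}(1),e_{lm}(-1)]\;=\;e_{km}(-1)\;\in\;N,
\end{equation*}
and from this single elementary matrix your commutator bootstrapping does produce every $e_{ij}(r)$, hence $N=E_n(R)$; the identities used hold over any associative ring, so no commutativity of $R$ is needed. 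With this insertion your argument becomes a correct and genuinely different alternative to the paper's proof: it trades the external input (Berrick--Matthey) for an explicit two-line matrix identity, which is arguably more elementary. As written, however, the decisive step is missing.
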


\begin{proof}
When \ $R=\mathbb{Z},$ the ring of integers, we get\ $E_{n}(R)=\mathrm{SL}%
_{n}(\mathbb{Z}).$ By Lemma \ref{autf}, $\mathrm{SAut}(F_{n})$ is normally
generated by $\ A_{n+1}.$ Since $A_{n+1}$ is mapped injectively to a
subgroup of $\mathrm{SL}_{n}(\mathbb{Z}),$ we see that $\mathrm{SL}_{n}(%
\mathbb{Z})$ is generated by $\ A_{n+1}.$ For a general ring $R,$ let \ 
\begin{equation*}
i:\mathbb{Z}\rightarrow R
\end{equation*}%
be the natural map defined by $i(1)=1\in R.$ Since $E_{n}(R)$ is normally
generated by the image $i(\mathrm{SL}_{n}(\mathbb{Z}))$ by the commutator
formula (for example, see \cite{M}, 1.2C), the group $E_{n}(R)$ is normally
generated by $A_{n+1}.$
\end{proof}

\subsection{Classical groups}

Let $R$ be an arbitrary ring and assume that an anti-automorphism $\ast
:x\mapsto x^{\ast }$ is defined on $R$ such that $x^{\ast \ast }=\varepsilon
x\varepsilon ^{\ast }$ for some unit $\varepsilon ^{\ast }=\varepsilon ^{-1}$
of $R$ and every $x$ in $R$. It determines an anti-automorphism of the ring $%
M_{n}(R)$ of all $n\times $ $n$ matrices $(x_{ij})$ by $(x_{ij})^{\ast
}=(x_{ji}^{\ast })$. \ Set 
\begin{equation*}
R_{\varepsilon }=\{x-x^{\ast }\varepsilon |\,\,x\in R\}
\end{equation*}
and 
\begin{equation*}
R^{\varepsilon }=\{x\in R\,|\,\,x=-x^{\ast }\varepsilon \}.
\end{equation*}
If some additive subgroup $\Lambda $ of $(R,+)$ satisfies:{}{}

\begin{enumerate}
\item[(i)] $r^{\ast }\Lambda r\subset \Lambda $ for all $r\in R$;

\item[(ii)] $R_{\varepsilon }\subset \Lambda \subset R^{\varepsilon },$
\end{enumerate}

\noindent we will call $\Lambda $ a form and $(\Lambda ,\ast ,\varepsilon )$
a form parameter on $R$. Usually $(R,\Lambda )$ is called a form ring. Let 
\begin{equation*}
\Lambda _{n}=\{(a_{ij})\in M_{n}R|\,\,a_{ij}=-a_{ji}^{\ast }\varepsilon 
\text{ for }i\neq j\text{ and}a_{ii}\in \Lambda \}.
\end{equation*}
For an integer $n\geq 1,$ we define the unitary group\newline
\begin{equation*}
U_{2n}(R,\Lambda )=\left\{ \left( 
\begin{array}{ll}
\alpha & \beta \\ 
\gamma & \delta%
\end{array}%
\right) \in \mathrm{GL}_{2n}R\,|\,\,\alpha ^{\ast }\delta +\gamma ^{\ast
}\varepsilon \beta =I_{n},\,\,\,\alpha ^{\ast }\gamma ,\,\,\,\beta ^{\ast
}\delta \in \Lambda _{n}\right\} .
\end{equation*}%
We could also define the elementary unitary group $EU_{2n}(R,\Lambda )$. For
more details, see \cite{M}.

The unitary group $U_{2n}(R,\Lambda )$ has many important special cases, as
follows.

\begin{itemize}
\item When $\Lambda =R$, the group $U_{2n}(R,\Lambda )$ is the symplectic
group. This can only happen when $\varepsilon =-1$ and $\ast =\mathrm{id}%
_{R} $ ($R$ is commutative) is the trivial anti-automorphism.

\item When $\Lambda =0$, the group $U_{2n}(R,\Lambda )$ is the ordinary
orthogonal group. This can only happen when $\varepsilon =1$ and $\ast =%
\mathrm{id}_{R}\ $($R$ is commutative) as well.

\item When $\Lambda =R^{\varepsilon }$ and $\ast \neq \mathrm{id}_{R}$, the
group $U_{2n}(R,\Lambda )$ is the classical unitary group 
\begin{equation*}
U_{2n}=\{A\in \mathrm{GL}_{2n}R|\,\ A^{\ast }\varphi _{n}A=\varphi _{n}\},
\end{equation*}%
where 
\begin{equation*}
\varphi _{n}=\left( 
\begin{array}{cc}
0 & I_{n} \\ 
\varepsilon I_{n} & 0%
\end{array}%
\right) .
\end{equation*}
\end{itemize}

\begin{lemma}
\label{ui}When \ $n\geq 3,$ the elementary group $EU_{2n}(R,\Lambda )$ is
normally generated by the alternating group $A_{n+1}.$
\end{lemma}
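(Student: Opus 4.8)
The plan is to mimic the proof of Lemma \ref{lin}, replacing the elementary linear group by a hyperbolic copy of it inside the unitary group. First I would use the natural map $i:\mathbb{Z}\to R$ to build the block-diagonal (hyperbolic) embedding
\[
h:\mathrm{SL}_n(\mathbb{Z})=E_n(\mathbb{Z})\longrightarrow EU_{2n}(R,\Lambda),\qquad h(\alpha)=\begin{pmatrix}\alpha & 0\\ 0 & (\alpha^{*})^{-1}\end{pmatrix}.
\]
The image of each integer elementary matrix $e_{ij}(1)$ is exactly the short-root (Levi) elementary unitary generator with entry $i(1)=1$, so $h$ really does land in the \emph{elementary} subgroup and carries $A_{n+1}<\mathrm{SL}_n(\mathbb{Z})$ into $EU_{2n}(R,\Lambda)$. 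By Lemma \ref{lin} the group $\mathrm{SL}_n(\mathbb{Z})$ is normally generated by $A_{n+1}$, and the normal closure of $A_{n+1}$ computed inside the larger group $EU_{2n}(R,\Lambda)$ contains the one computed inside $h(\mathrm{SL}_n(\mathbb{Z}))$; hence it already contains the whole hyperbolic copy $h(\mathrm{SL}_n(\mathbb{Z}))$. It therefore suffices to show that $h(\mathrm{SL}_n(\mathbb{Z}))$ normally generates $EU_{2n}(R,\Lambda)$.

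For that last step I would work with the Steinberg/Chevalley commutator relations among the elementary unitary transvections (see \cite{M}), indexing the root elements $x_{\bullet}(\cdot)$ by the short roots $e_i-e_j$, $e_i+e_j$ and the long roots $2e_i$. Let $N$ be the normal closure of the integer Levi $h(\mathrm{SL}_n(\mathbb{Z}))$; I must place every elementary generator into $N$. The short-root generators with an arbitrary entry $r\in R$ are handled verbatim as in Lemma \ref{lin}: from $[x_{e_i-e_j}(r),x_{e_j-e_k}(1)]=x_{e_i-e_k}(r)$ one writes each such generator as a product of an integer Levi element and an $EU_{2n}(R,\Lambda)$-conjugate of one, both lying in $N$, and the isotropic short generators $x_{e_i+e_k}(r)$ come out the same way by conjugating an integer Levi generator by the appropriate short generator. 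The hypothesis $n\ge 3$ is used precisely here, to supply three distinct indices $i,j,k$, matching the requirement of Lemma \ref{lin}.

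The main obstacle is the long-root generators $x_{2e_i}(s)$ with $s$ ranging over the form parameter $\Lambda$: when $\Lambda$ strictly contains the minimal form parameter $R_{\varepsilon}$ these are genuine generators rather than commutators of short generators carrying short parameters, so blindly copying the linear argument will not produce them. My proposed remedy is to conjugate an integer Levi generator by the long generator that already carries $s$. The relevant mixed Chevalley relation has the shape
\[
[x_{e_i-e_j}(1),x_{2e_j}(s)]=x_{e_i+e_j}(\pm s)\,x_{2e_i}(\pm s)
\]
(up to signs and the twist by $*$), so the conjugate $x_{2e_j}(s)\,h(e_{ij}(1))\,x_{2e_j}(-s)\in EU_{2n}(R,\Lambda)$ equals $h(e_{ij}(1))$ times the short isotropic generator $x_{e_i+e_j}(\pm s)$, which is already in $N$, times $x_{2e_i}(\pm s)$; solving gives $x_{2e_i}(\pm s)\in N$ for every $s\in\Lambda$. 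Once all short- and long-root generators lie in $N$ we conclude $N=EU_{2n}(R,\Lambda)$, which together with the reduction of the first paragraph completes the argument.
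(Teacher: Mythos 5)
Your proposal is correct, and it shares the paper's skeleton: embed the linear group hyperbolically into $EU_{2n}(R,\Lambda )$ and reduce the lemma to the statement that this Levi copy normally generates the elementary unitary group. Where you diverge is in how that normal-generation statement is established. The paper invokes it as a known result: the commutator formula for unitary groups (\cite{M}, 5.3.13 and 5.3B) gives that $EU_{2n}(R,\Lambda )$ is normally generated by the hyperbolic image of $E_{n}(R)$, and then Lemma \ref{lin} is applied for the general ring $R$. You instead apply Lemma \ref{lin} only over $\mathbb{Z}$ and prove by hand, from the Steinberg/Chevalley relations, that the normal closure $N$ of the integral Levi $h(\mathrm{SL}_{n}(\mathbb{Z}))$ is everything: Levi short roots by the commutator trick (this is exactly where $n\geq 3$ supplies three distinct indices), isotropic short roots by the mixed short-root relation, and long roots by solving $[x_{e_{i}-e_{j}}(1),x_{2e_{j}}(s)]=x_{e_{i}+e_{j}}(\pm s)\,x_{2e_{i}}(\pm s)$ for $x_{2e_{i}}(\pm s)$, using that the isotropic factor already lies in $N$ --- note the order of your steps matters here. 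That last step is the genuinely delicate point, and you identify it correctly: when $\Lambda \supsetneq R_{\varepsilon }$ the long-root generators are not obtainable by blindly copying the linear argument, and your mixed relation is the right fix (it is a true relation in $EU_{2n}(R,\Lambda )$; the two off-diagonal entries it produces are consistent because any $s\in \Lambda \subset R^{\varepsilon }$ satisfies $s=-s^{\ast }\varepsilon $). The trade-off between the two routes: the paper's citation is shorter and hides the form-parameter subtlety inside \cite{M}, while your argument is self-contained, makes visible exactly where $n\geq 3$ and the form parameter enter, and proves the marginally stronger fact that the integral hyperbolic Levi alone normally generates $EU_{2n}(R,\Lambda )$, collapsing the paper's two-stage reduction (from $\mathbb{Z}$ to $R$, then from linear to unitary) into a single root-calculus argument.
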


\begin{proof}
The hyperbolic homomorphism $H:\mathrm{GL}_{n}(R)\rightarrow
U_{2n}(R,\Lambda )$ defined by 
\begin{equation*}
A\longmapsto \left( 
\begin{array}{cc}
A &  \\ 
& (A^{-1})^{\ast }%
\end{array}%
\right)
\end{equation*}%
induced an embedding $E_{n}(R)\rightarrow EU_{2n}(R,\Lambda )$ (cf. \cite{M}%
, Section 5.3C). By the commutator formula for unitary groups, the group $%
EU_{2n}(R,\Lambda )$ is normally generated by $E_{n}(R)$ (cf. \cite{M},
5.3.13 and 5.3B). Since $E_{n}(R)$ is normally generated by $A_{n+1}$ by
Lemma \ref{lin}, the group $EU_{2n}(R,\Lambda )$ is normally generated by $%
A_{n+1}.$
\end{proof}

\section{Proof of Theorem \protect\ref{th2}}

In order to prove Theorem \ref{th2}, we need the following lemma of
Weinberger \cite{we} (Lemma 2).

\begin{lemma}
\label{wein}If a finite group $H$ acts homologically trivially on a torus \ $%
T^{r}$, then the action is equivariantly homotopy equivalent to an action
that factors through the group of translations of the torus. In particular,
if $H$ is nonabelian, the action is not effective.
\end{lemma}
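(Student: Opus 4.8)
The plan is to reduce the equivariant homotopy classification of the action to the algebra of a single group extension. First I would lift the $H$-action on $T^{r}=\mathbb{R}^{r}/\mathbb{Z}^{r}$ to the universal cover $\mathbb{R}^{r}$ and form the group $\Gamma$ of all lifts of all elements of $H$, exactly as in the discussion preceding Lemma \ref{flatlem}. This yields a short exact sequence
\[
1\rightarrow \mathbb{Z}^{r}\rightarrow \Gamma \rightarrow H\rightarrow 1 ,
\]
in which the conjugation action of $H$ on $\mathbb{Z}^{r}=\pi _{1}(T^{r})$ is precisely the induced action on $H_{1}(T^{r})$. Since the action is homologically trivial, this action is trivial, so the extension is \emph{central} and is classified by a class $c\in H^{2}(H;\mathbb{Z}^{r})$ with $\mathbb{Z}^{r}$ a trivial module.

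Next I would produce a translation action realizing the same class $c$. A homomorphism $\rho :H\rightarrow T^{r}$ defines the translation action $h\cdot x=\rho (h)+x$; lifting $\rho$ set-theoretically to $\mathbb{R}^{r}$ shows that its associated central extension is the image of $\rho$ under the connecting homomorphism
\[
\delta :H^{1}(H;T^{r})\rightarrow H^{2}(H;\mathbb{Z}^{r})
\]
of the coefficient sequence $0\rightarrow \mathbb{Z}^{r}\rightarrow \mathbb{R}^{r}\rightarrow T^{r}\rightarrow 0$. Because $H$ is finite and $\mathbb{R}^{r}$ is a uniquely divisible abelian group, $H^{i}(H;\mathbb{R}^{r})=0$ for $i\geq 1$, so $\delta$ is an isomorphism; and $H^{1}(H;T^{r})=\mathrm{Hom}(H,T^{r})$ since the module is trivial. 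Hence there is a (unique) homomorphism $\rho$ with $\delta (\rho )=c$, and the resulting translation action induces exactly the central extension $\Gamma$ above.

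The heart of the argument is then a rigidity statement: two actions of $H$ on the aspherical manifold $T^{r}$ inducing the same extension $\Gamma$ are equivariantly homotopy equivalent. I would approach this through the Borel construction $EH\times _{H}T^{r}$. Since $T^{r}$ is a $K(\mathbb{Z}^{r},1)$ and $EH$ is contractible, the fibration
\[
T^{r}\rightarrow EH\times _{H}T^{r}\rightarrow BH
\]
has aspherical total space with $\pi _{1}(EH\times _{H}T^{r})\cong \Gamma$, so it is a $K(\Gamma ,1)$ lying over $BH$. For the given action and the translation model these two $K(\Gamma ,1)$'s are identified up to homotopy by uniqueness of Eilenberg--MacLane spaces, compatibly with the projections to $BH$ that realize $\Gamma \rightarrow H$. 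A fibrewise such equivalence is exactly the datum of an $H$-equivariant homotopy equivalence of the two $T^{r}$-actions, which is the assertion; this is the content of the Conner--Raymond theory of actions on aspherical manifolds on which Lee and Raymond \cite{lr} also rely, and I would use Charlap \cite{ch} for the Bieberbach background.

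Finally, the \emph{In particular} clause is immediate: a translation action factors through the abelian group $T^{r}$, so its image in $\mathrm{Homeo}(T^{r})$ is abelian; if $H$ is nonabelian then $\rho :H\rightarrow T^{r}$ has nontrivial kernel, the translation action is not effective, and neither is any equivariantly homotopy equivalent action. The step I expect to be the main obstacle is the rigidity statement, namely upgrading the homotopy equivalence of Borel constructions (which is automatic because $K(\Gamma ,1)$ is homotopy-rigid) to a genuine fibrewise, hence $H$-equivariant, homotopy equivalence of the tori. Making the dictionary between fibre-homotopy equivalences over $BH$ and $H$-equivariant homotopy equivalences precise, and checking that it respects the identification of fundamental groups with $\Gamma$, is the delicate point.
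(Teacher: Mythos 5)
The paper itself does not prove this lemma: it is imported verbatim from Weinberger \cite{we} (Lemma 2), so your proposal must stand on its own. Your first two steps do stand: homological triviality makes the lifted extension $1\to \mathbb{Z}^{r}\to \Gamma \to H\to 1$ central, and since $H^{i}(H;\mathbb{R}^{r})=0$ for $i\geq 1$, the Bockstein $\mathrm{Hom}(H,T^{r})=H^{1}(H;T^{r})\to H^{2}(H;\mathbb{Z}^{r})$ is an isomorphism, so the class of $\Gamma$ is realized by a translation action through some $\rho :H\to T^{r}$. The rigidity step, however, is a genuine gap, and the Borel construction cannot close it. A fibrewise homotopy equivalence $EH\times _{H}X\to EH\times _{H}Y$ over $BH$ pulls back along $EH\to BH$ to an $H$-map $EH\times X\to Y$; since $EH\times X$ is a \emph{free} $H$-space, this is strictly weaker than an $H$-map $X\to Y$. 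The Borel construction records only coarse (``free'') equivariant homotopy theory, whereas a genuine $H$-homotopy equivalence must restrict to homotopy equivalences $X^{K}\simeq Y^{K}$ on all fixed sets, data the Borel construction cannot see (the trivial action on a point and the free action on $EH$ have fibre-homotopy equivalent Borel constructions, both $\simeq BH$, but are not $H$-homotopy equivalent). Closing the gap means actually building an $H$-map $X\to Y$: one must verify that every isotropy group $K=H_{x}$ of $X$ lies in $\ker \rho $ (true: a $K$-fixed point splits the restricted extension, and injectivity of the Bockstein forces $\rho |_{K}=0$), and then run equivariant obstruction theory on universal covers, which requires an $H$-CW or $H$-ANR structure on $X$ --- a nontrivial issue for merely topological actions. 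This is precisely the content Weinberger imports from Conner--Raymond/Lee--Raymond theory of actions on aspherical manifolds (the theory quoted as Lemma \ref{lr} in this paper), and your proposal supplies no substitute for it.

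A second gap, which you did not flag: the ``In particular'' clause is not a formal consequence, because equivariant homotopy equivalence does not preserve effectiveness. For instance, $\mathbb{R}$ with the flip $\mathbb{Z}/2$-action is $\mathbb{Z}/2$-equivariantly homotopy equivalent to a point with the trivial action, yet the flip is effective; so ``neither is any equivariantly homotopy equivalent action'' is an invalid inference. The correct deduction needs the compact manifold structure: a genuine $H$-homotopy equivalence restricts to fixed sets, giving $X^{N}\simeq Y^{N}=T^{r}$ for $N=\ker \rho \supseteq \lbrack H,H]$; since a proper compact subset of a connected closed orientable $r$-manifold has vanishing $r$-th \v{C}ech cohomology (Alexander duality, or invoke Newman's theorem), this forces $X^{N}=T^{r}$, i.e. $N$ acts trivially --- which is, incidentally, the stronger conclusion the paper actually uses in the proof of Theorem \ref{th2}. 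Alternatively, within this paper's own framework the second clause has a cleaner proof that bypasses rigidity entirely: if the action is effective, the lift group $\Gamma $ is an abstract crystallographic group in which $C_{\Gamma }(\mathbb{Z}^{r})$ is the maximal abelian normal subgroup (Lee--Raymond, as in Lemma \ref{flatlem}); centrality of $\mathbb{Z}^{r}$ gives $C_{\Gamma }(\mathbb{Z}^{r})=\Gamma $, so $\Gamma $, and hence $H$, is abelian.
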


\begin{proof}[Proof of Theorem \protect\ref{th2}]
When $n\geq 4,$ the alternating group $A_{n+1}$ acts trivially on $M^{r}$ by
Theorem \ref{th1} and Lemma \ref{ke}. Since $G$ is normally generated by $%
A_{n+1}$ (cf. Lemma \ref{autf},\ref{lin},\ref{ui}), any action of \ $G$ on \ 
$M^{r}$ by homeomorphisms is trivial. When $n=3,r=1,$ Theorem \ref{th2} is
already contained in the results proved by Bridson and Vogtmann \cite{bv1}
and Ye \cite{Ye12}. When $n=3,r=2,$ it is proved in \cite{ye} that any
action of $\mathrm{SL}_{n}(\mathbb{Z})$ on $M^{r}$ is trivial. Therefore,
any action of $E_{n}(R)$ or $EU_{2n}(R,\Lambda )$ is trivial since these two
groups are normally generated by the image of $\mathrm{SL}_{n}(\mathbb{Z})$
(see the proofs of Lemma \ref{lin} and Lemma \ref{ui}). For the case when $G=%
\mathrm{SAut}(F_{n}),$ let $A_{4}<G$ be the alternating group subgroup
constructed in Section \ref{sec}. Note that $M^{2}$ is the torus $T^{2}$, or
the Klein bottle $K$. Since any action of $G$ on the nonorientable manifold $%
K$ is uniquely lifted to be an action on the orientable double covering \ $%
T^{2}$ (cf. \cite{Br}, Cor. 9.4, p.67), we would have an action of $G$ on $%
T^{2}$ in both cases of $M$. Since $G$ acts trivially on $H_{2}(T^{2};%
\mathbb{Z})=\mathbb{Z}^{2}$ (cf. \cite{bv1}), Lemma \ref{wein} implies that
the element $\sigma _{12}\sigma _{34}\in \lbrack A_{4},A_{4}]$\textrm{\ }%
(the commutator subgroup) acts trivially on $M.$ However, the group $G$ is
normally generated by $\sigma _{12}\sigma _{34}$ (cf. \cite{bv1},
Proposition 3.1), which implies that the action of $G$ is trivial. This
argument works for all $G$ as well. The proof is finished.
\end{proof}

\begin{corollary}
When $n\geq 3,$ we have $n=d_{\mathbb{Z}}(\mathrm{SL}_{n}(\mathbb{Z}))=d_{%
\mathbb{R}}(\mathrm{SL}_{n}(\mathbb{Z}))=d_{h}(\mathrm{SL}_{n}(\mathbb{Z}),%
\mathcal{FM}).$
\end{corollary}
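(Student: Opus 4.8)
The plan is to establish the chain of equalities by separating the integral and acting dimensions, which collapse immediately from the machinery already developed, from the real dimension, which needs an extra rigidity input. Throughout write $G=\mathrm{SL}_n(\mathbb{Z})=E_n(\mathbb{Z})$ and recall the elementary inequalities $d_{\mathbb{R}}(G)\le d_{\mathbb{Q}}(G)\le d_{\mathbb{Z}}(G)$ and $d_h\le d_s$ from the Introduction. First I would record the upper bounds: since $G<\mathrm{GL}_n(\mathbb{Z})$ via the defining inclusion, $d_{\mathbb{Z}}(G)\le n$, and hence $d_{\mathbb{R}}(G)\le n$ as well; and because $\mathcal{FM}$ contains all tori, Lemma \ref{le1} gives $d_s(G,\mathcal{FM})\le d_{\mathbb{Z}}(G)\le n$, so also $d_h(G,\mathcal{FM})\le n$.

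Next I would pin down the lower bound on the acting dimension. Applying Theorem \ref{th2} with $G=E_n(\mathbb{Z})$: for every closed flat manifold $M^r$ with $r<n$, any action of $G$ on $M^r$ by homeomorphisms, in particular by diffeomorphisms, is trivial, hence certainly not injective since $G$ is infinite for $n\ge 3$. Therefore there is no faithful homomorphism $G\hookrightarrow \mathrm{Diff}(N)$ for any $N\in\mathcal{FM}^r$ with $r<n$, which means $d_h(G,\mathcal{FM})\ge n$. Combining with the upper bounds, $n\le d_h\le d_s\le d_{\mathbb{Z}}\le n$, so $d_h(G,\mathcal{FM})=d_s(G,\mathcal{FM})=d_{\mathbb{Z}}(G)=n$.

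It remains to show $d_{\mathbb{R}}(G)=n$; since $d_{\mathbb{R}}(G)\le n$ is already known, the content is the lower bound $d_{\mathbb{R}}(G)\ge n$. Here the finite-subgroup strategy underlying Theorem \ref{th1} (via the alternating group $A_{n+1}$) is unavailable: as noted after Theorem \ref{th1}, $A_5<\mathrm{SL}_3(\mathbb{R})$, so finite subgroups of $G$ can carry faithful real representations of dimension well below $n$. Instead I would exploit the rigidity of the infinite arithmetic group. Complexifying a faithful real representation yields $d_{\mathbb{C}}(G)\le d_{\mathbb{R}}(G)$, so it suffices to bound the complex dimension. For $n\ge 3$ the group $\mathrm{SL}_n(\mathbb{Z})$ is an irreducible higher-rank lattice, and by Margulis superrigidity together with the normal subgroup theorem every complex linear representation of $G$ of dimension $<n$ has finite image, the smallest nontrivial rational representation of the algebraic group $\mathrm{SL}_n$ being the standard $n$-dimensional one. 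Since $G$ is infinite, no such representation is faithful, whence $d_{\mathbb{C}}(G)\ge n$ and therefore $d_{\mathbb{R}}(G)\ge n$. This gives $d_{\mathbb{R}}(G)=n$ and completes the chain.

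The main obstacle is precisely this last step. The equalities among $d_{\mathbb{Z}}$, $d_s$ and $d_h$ are formal consequences of Lemma \ref{le1} and Theorem \ref{th2}, but the real lower bound cannot be reduced to finite subgroups and genuinely requires superrigidity, or an equivalent statement that the minimal faithful characteristic-zero representation dimension of $\mathrm{SL}_n(\mathbb{Z})$ equals $n$.
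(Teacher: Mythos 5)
Your proposal is correct, and for the equalities $d_{\mathbb{Z}}=d_{h}=n$ it follows the paper's own route exactly: upper bounds from the defining inclusion $\mathrm{SL}_{n}(\mathbb{Z})<\mathrm{GL}_{n}(\mathbb{Z})$ acting on $T^{n}$ together with Lemma \ref{le1}, and the lower bound $d_{h}(\mathrm{SL}_{n}(\mathbb{Z}),\mathcal{FM})\geq n$ from Theorem \ref{th2} (the paper's proof actually writes ``Theorem \ref{th3} implies $d_{h}=n$,'' but this is evidently a slip for Theorem \ref{th2}, since Theorem \ref{th3} concerns finite groups and cannot by itself control actions of the infinite group $\mathrm{SL}_{n}(\mathbb{Z})$; your invocation of Theorem \ref{th2} is the intended argument). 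The one place you genuinely diverge is the lower bound $d_{\mathbb{R}}(\mathrm{SL}_{n}(\mathbb{Z}))\geq n$. The paper disposes of this with a citation to Bridson--Vogtmann \cite{bv1}, who prove that every representation of dimension less than $n$ is trivial; their proof is elementary, working with finite signed-permutation subgroups of $\mathrm{SAut}(F_{n})$ (which surjects onto $\mathrm{SL}_{n}(\mathbb{Z})$) via weight arguments plus normal generation, and it applies over an arbitrary field. You instead invoke Margulis superrigidity and the normal subgroup theorem to conclude that every complex representation of dimension less than $n$ has finite image, hence cannot be faithful. Both arguments are valid, and your preliminary observation is exactly right: since $A_{5}<\mathrm{SL}_{3}(\mathbb{R})$, no bound on $d_{\mathbb{R}}$ can be extracted from finite subgroups alone (in contrast to the $d_{\mathbb{Z}}$ and $d_{h}$ bounds), so some rigidity statement about the infinite group is unavoidable. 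The trade-off is that superrigidity is much heavier machinery than the problem requires, whereas the Bridson--Vogtmann input is elementary, stays within the paper's toolkit, and yields the stronger conclusion (outright triviality, not merely finite image, of all low-dimensional representations over any field).
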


\begin{proof}
Note that $\mathrm{SL}_{n}(\mathbb{Z})$ acts effectively on the Euclidean
space $\mathbb{R}^{n}$ and the torus $T^{n}.$ This implies 
\begin{equation*}
d_{\mathbb{R}}(\mathrm{SL}_{n}(\mathbb{Z}))\leq d_{\mathbb{Z}}(\mathrm{SL}%
_{n}(\mathbb{Z}))\leq n.
\end{equation*}
The fact that $n\leq d_{\mathbb{R}}(\mathrm{SL}_{n}(\mathbb{Z}))$ follows
from \cite{bv1}. Theorem \ref{th3} implies that $d_{h}(\mathrm{SL}_{n}(%
\mathbb{Z}),\mathcal{FM})=n$.
\end{proof}

\bigskip

\noindent \textbf{Acknowledgements}

The author would like to thank Prof. F. Thomas Farrell for an inspiring
discussion on symmetries of flat manifolds. The author also wants to thank
the referee for a detailed report.

\bigskip

Department of Mathematical Sciences, Xi'an Jiaotong-Liverpool University,
111 Ren Ai Road, Suzhou, Jiangsu 215123, China.

E-mail: Shengkui.Ye@xjtlu.edu.cn

\end{document}